      \theoremstyle{plain}
      \newtheorem{theorem}{Theorem}[section]
      \newtheorem{lemma}[theorem]{Lemma}
      \newtheorem{corollary}[theorem]{Corollary}
      \newtheorem{remark}[theorem]{Remark}
      \newtheorem{definition}[theorem]{Definition}      
      \newtheorem{assumptions}[theorem]{Assumptions}     
\numberwithin{equation}{section}
      \def\@setcopyright{}
      \def\serieslogo@{}
\def\M{\EuScript{M}}
\def\c{\EuScript{C}}
\def\R{\mathbb R}
\def\Rm{\mathbb R ^m}
\def\Z{\mathbb Z}
\def\N{\mathbb N}
\def\G{\mathcal G}
\def\S{\mathcal S}
\def\po{{\mathcal{R}}}
\def\s{{\mathcal{S}}}
\def\c{{\mathcal{C}}}
\def\n{{\mathcal{N}}}
\def\rm{\Lambda ^\mu}
\newcommand{\la}{\lambda}
\newcommand{\La}{\Lambda}
\def\dist{\text{dist}}
\def\Id{\text{Id}}
\def\e{\varepsilon}
\def\a{\alpha}
\def\b{\beta}
\def\Ci{C^\infty}
\def\Cr{C^{N,\alpha}}
\def\bw{\bar{W}}
\def\f{\bar f}
\def\QED{\hfill\hfill{\square}}
\def\E{{\mathcal{E}}}
\def\Ex{{\mathcal{E}_x}}
\def\V{{\mathcal{V}}}
\def\bb{{\mathcal{B}}}
\def\bx{{\mathcal{B}^{x}}}
\def\qx{{\tilde Q}}
\def\fe{{\mathcal {F}}}
\def\f{{\mathcal {F}}}
\def\fd{{F}}
\def\pe{{\mathcal {P}}}
\def\p{{\mathcal {P}}}
\def\pd{{P}}
\def\h{{\mathcal {H}}}
\def\hd{H}
\def\g{\mathcal G}
\def\G{\Gamma}
\begin{document}

\date{\today}
\author{Boris Kalinin and Victoria Sadovskaya$^{\ast}$}

\address{Department of Mathematics, The Pennsylvania State University, University Park, PA 16802, USA.}
\email{kalinin@psu.edu, sadovskaya@psu.edu}

\title [Normal forms for non-uniform contractions]
{Normal forms for non-uniform contractions} 

\thanks{$^{\ast}$ Supported in part by NSF grant DMS-1301693}



\begin{abstract}
Let $f$ be a measure-preserving transformation of a Lebesgue space $(X,\mu)$
and let  $\f$ be its extension  to a bundle $\E = X \times\Rm$ by smooth 
fiber maps $\f_x : \E_x \to \E_{fx}$ so that the derivative of $\f$ at the zero 
section has negative Lyapunov exponents.
We construct a measurable system of smooth coordinate changes $\h_x$ on $\E_x$ for $\mu$-a.e. 
$x$ so that the maps $\p_x =\h_{fx} \circ \f_x \circ \h_x ^{-1}$
are sub-resonance polynomials  in a finite dimensional Lie group. 
Our construction shows that such  $\h_x$ and $\p_x$ are unique
up to a sub-resonance polynomial. As a consequence, we obtain the centralizer 
theorem that the coordinate change $\h$ also conjugates any commuting 
extension to a polynomial extension of the same type.
We apply our results to  a measure-preserving diffeomorphism $f$ with a 
 non-uniformly contracting invariant foliation $W$. We construct a measurable 
 system of smooth coordinate changes $\h_x: W_x \to T_xW$ such 
 that the maps $\h_{fx} \circ f \circ \h_x ^{-1}$ are polynomials
 of sub-resonance type. Moreover, we show that for almost every 
leaf the coordinate changes exist at each point on the leaf and 
give a coherent atlas with transition maps in a finite dimensional Lie group. 

\end{abstract}

\maketitle


\section{Introduction}

The theory of normal forms for smooth maps originated in the works of Poincare 
and Sternberg \cite{St} and normal forms at fixed points and invariant manifolds have 
been extensively studied \cite{BK}. More recently, non-stationary normal form theory 
was developed in the context of a diffeomorphism $f$ contracting a foliation $W$. 
The goal is to obtain a family 
of diffeomorphisms $\h_x: W_x \to T_xW$ such that the maps 
\begin{equation} \label{form}
 \tilde f_x =\h_{fx} \circ f \circ \h_x ^{-1}: \;T_x W \to T_{fx}W
\end{equation}
are as simple  as possible, for example linear maps or polynomial maps in a finite dimensional Lie group. Such a map $\tilde f_x$ is called a normal form of $f$ on $W_x$.

The non-stationary normal form theory started with the linearization along 
one-dimensional foliations obtained by Katok and Lewis \cite{KL}. In a more general
setting of contractions with narrow band spectrum, it was developed by Guysinsky 
and Katok \cite{GK,G}, and  a differential geometric point of view was presented 
by Feres \cite{F2}. For the  linearization, further results  were obtained
by the second author in \cite{S} and it was shown in \cite{KS} that the coordinates 
$\h_x$ give a consistent affine atlas on each leaf of $W$. 
In \cite{KS15} we extended these results to the general narrow band case.
More precisely, 
we gave a construction of $\h_x$ that depend smoothly on $x$ along the leaves
and proved that they define an atlas with transition maps in a finite dimensional
Lie group.  Non-stationary normal forms were used extensively in 
the study of rigidity of uniformly hyperbolic dynamical systems and group actions, 
see for example \cite{KSp97,KS03,KS,Fa,FFH,GKS10, FKS}.  

To obtain applications for non-uniformly hyperbolic systems and actions, one 
needs a similar theory of non-stationary normal forms for non-uniform contractions. 
The existence and centralizer theorems were stated without proof in \cite{KKt00}
along with a program of potential applications. The theory, however, 
was not developed for quite a while. The linearization of a $C^{1+\a}$ diffeomorphism along a one-dimensional non-uniformly contracting 
foliation  was constructed  in \cite{KKt} and used in 
the study of measure rigidity in \cite{KKt,KKtR}. Similar results for higher 
dimensional foliations with  pinched exponents were obtained 
by Katok and Rodriguez Hertz in \cite{KtR}. The existence of $\h_x$ for 
a general $C^\infty$ extension was proved by Li and Lu \cite{LL}.

In this paper we develop the theory of non-stationary polynomial normal 
forms for smooth extensions of measure preserving transformations by 
non-uniform contractions, described in the beginning of Section 2. 
This is a convenient general setting for the
construction. The foliation setting reduces to it by locally identifying 
the leaf $W_x$ with its tangent space $\E_x=T_xW$ and viewing
$\f _x = f|_{W_x} : \E_x \to \E_{fx}$ as an extension of the base system 
$f: \M \to \M$ by smooth maps on the bundle $\E=TW$. The base system
can then be viewed as just a measure preserving one. In the 
extension setting, the map $\h_x$ is a coordinate change on $\E_x$
and we denote 
$$\p_x=\h_{fx} \circ \f_x \circ \h_x ^{-1}: \E_x  \to \E_{fx}.
$$ 
In Theorem  \ref{NFext} we construct coordinate changes $\h_x$ for 
$\mu$ almost every $x$  so that $\p_x$ is a sub-resonance polynomial. 
For any regularity of $\f$ above the critical level, we obtain $\h$
in the same regularity class.

Our construction allows us to describe the exact extent of non-uniqueness 
in $\h_x$ and $\p_x$. Essentially, they are defined up to a sub-resonance polynomial. As a consequence of this, we obtain the centralizer 
theorem that the coordinate change $\h$ also conjugates any commuting 
extension to a normal form of the same type. We just learned of similar 
results  in differential geometric formulations  by Melnick \cite{M}.
The approach in \cite{M}  is different from ours and it relies on 
ergodic theorems for higher jets of $\f_x$. 
Our results  assume only temperedness of the  higher derivatives 
of $\f_x$ rather than certain integrability required in  \cite{M}. This allows 
 us to obtain applications to the foliation setting without any 
 assumptions on transverse regularity of the foliation.
 
In particular, we consider a diffeomorphism $f$ which preserves an 
ergodic measure with some negative Lyapunov exponents and take $W$
to be any strong part of the stable foliation.  In this setting Theorem \ref{NFfol} 
gives sub-resonance normal forms for $f$ along the leaves of $W$.  
Moreover, we show that for almost every leaf the normal form 
coordinates $\h_x$ exist at each point on the leaf and give a coherent 
atlas with transition maps in a finite dimensional Lie group $G$ determined 
by sub-resonance polynomials. 
This yields an invariant  structure of a $G$ homogeneous space on almost every 
leaf. 

We expect these results to be useful in the study of non-uniformly 
hyperbolic systems and group actions.


\section{Statements of results}\label{Snormalforms}

\begin{assumptions} \label{ass} In this paper, \\
$(X,\mu)$ is a  Lebesgue probability space, \\
$f:X\to X$ 
is an invertible ergodic measure-preserving transformation of $(X,\mu)$,\\
$\E = X \times\Rm$ is a finite dimensional vector bundle over $X$,\\
$\V$ is a neighborhood of the zero section in $\E$, \\
$\fe:\V  \to \E$ is a measurable extension of $f$ that preserves the zero 
 section,  \\
 $F:\E\to \E$ is the derivative of  $\fe$ at zero section, 
 $F_x=D_0\fe_x :\E_x \to \E_{fx}$,\\
 $F$ and $F^{-1}$ exist and satisfy  $\,\log\|F_x\|\in L^1(X,\mu)$ and $\,\log\|F_x^{-1}\|\in L^1(X,\mu)$,\\
and the Lyapunov exponents of $F$ are negative: $\chi_1<\dots<\chi_\ell<0$.
 \end{assumptions}

\noindent  {\bf Sub-resonance polynomials.} 
 Let $\chi_1<\dots<\chi_\ell<0$ be the distinct Lyapunov exponents of $\fd$ 
and let $\Ex=\E_x^{1} \oplus \dots \oplus \E_x^{\ell}$ be the splitting of $\E_x$ 
for $x \in \La$ into the Lyapunov subspaces given by the Multiplicative Ergodic Theorem \ref{MET}.

We say that a map between vector spaces is {\em polynomial}\, if each component  is given by a polynomial 
in some, and hence every, bases.
We consider a polynomial map $P: \E_x \to \E_y$ with $P(0_x)=0_y$ and 
split it into components $(P_1(t),\dots,P_{\ell}(t))$, where $P_i: \E_x \to \E_y^i$. 
Each $P_i$ can be written uniquely as a linear combination of  polynomials
of specific homogeneous types: we say that $Q: \E_x \to \E_y^i $ has homogeneous type $s= (s_1, \dots , s_\ell)$ if for any real numbers 
$a_1, \dots , a_\ell$ and vectors
$t_j\in \E_x^j$, $j=1,\dots, \ell,$ we have 
\begin{equation}\label{stype}
Q(a_1 t_1+ \dots + a_\ell t_\ell)= a_1^{s_1} \cdots  a_\ell^{s_\ell} \, Q( t_1+ \dots + t_\ell).
\end{equation}

\begin{definition} \label{SRdef}
We say that a polynomial map $P: \E_x \to \E_y$ is {\em sub-resonance} 
if each component $P_i$ has only terms of homogeneous types $s= (s_1, \dots , s_\ell)$  satisfying {\em sub-resonance relations}
\begin{equation}\label{sub-resonance}
\chi_i\le\sum s_j \chi_j, \quad\text{where
$s_1,\dots,s_{\ell}\,$ are non-negative integers.}
\end{equation}
We denote by $\s_{x,y}$ the space of all sub-resonance polynomial maps 
from $\E_x$  to $\E_y$.
\end{definition}
Clearly, for any sub-resonance relation we have $s_j=0$ for $j<i$ and $\sum s_j \le \chi_1/ \chi_\ell$.
It follows that sub-resonance polynomial maps have degree at most 
\begin{equation}\label{degree}
d=d(\chi)= \lfloor \chi_1/\chi_\ell \rfloor.
\end{equation}
Sub-resonance polynomial maps $P: \E_x \to \E_x$ with $P(0)=0$ with invertible derivative at the origin form a group with respect to composition \cite{GK}. 
We will denote this  finite-dimensional Lie group by $G_x^{\chi}$. 
All groups $G_x^{\chi}$ are isomorphic, moreover,
any map $P\in \s_{x,y}$ with $P(0_x)=0_y$ and invertible derivative 
at $0_x$  induces an isomorphism between $G_x^{\chi}$ and $G_y^{\chi}$
by conjugation.


We denote by $B_{x,\sigma (x)}$ the closed ball of radius $\sigma (x)$ centered at 
$0 \in \E_x$. For $N\ge1$ and $0<\a \le1$ we denote by 
$\Cr (B_{x,\sigma (x)})=\Cr (B_{x,\sigma (x)}, \E_x)$ 
the space of functions from $B_{x,\sigma (x)}$ to 
$\E_x$ with continuous derivatives up to order $N\ge1$ on $B_{x,\sigma (x)}$ 
and with $N^{th}$ derivative satisfying $\a$-H\"older condition at $0$:
\begin{equation}\label{Canorm}
\| D^{(N)} R\|_\a =\sup \,\{ \,\| D^{(N)}_t R - D^{(N)}_0 R\|\cdot \|t\|^{-\a} : \; 0\ne t \in B_{x,\sigma (x)}\} < \infty.
  \end{equation}
 We call $\| D^{(N)} R\|_\a$ the $\a$-H\"older constant of $D^{(N)} R$ at $0$.
We equip the space $\Cr (B_{x,\sigma (x)})$ with the norm
\begin{equation}\label{Crnorm}
  \|R\|_{\Cr (B\, x,\sigma (x))}=\,
  \max\, \{\, \|  R\|_0, \;\| D^{(1)} R\|_0,\; ..., \;\| D^{(N)} R\|_0, \;\| D^{(N)} R\|_\a \, \},   \end{equation}
where $\| D^{(k)} R\|_0=\sup \{ \| D^{(k)}_t R\|: \; t \in B_{x,\sigma (x)}\}$.

  \vskip.1cm
  
 We say that a non-negative real-valued function $K$ on $X$ is 
{\it $\e$-tempered at $x$} if 
\begin{equation} \label{e-temp}
 \sup \,\{K(f^n x)\, e^{-\e n} :\, {n \in \N}\} < \infty,
\end{equation}
and that  $K$ is  {\it $\e$-tempered on a set}\, if it is $\e$-tempered at each of its points.

\vskip.1cm


We consider an extension $\f$ satisfying the Assumptions \ref{ass} and
denote by $\La$ the set of regular points and by $\chi_1<\dots<\chi_\ell<0$ the Lyapunov exponents of $F$ given by the Multiplicative Ergodic Theorem \ref{MET}.
For $N$ and $\a$ as above we define 
\begin{equation}\label{kappa}
\kappa = 1+3/\a \,\text{ if }\, N=1\;\text{ and  }\;
\kappa = 4 \,\text{ if }\, N\ge2 . 
\end{equation}
If $N\ge 2$ we allow $\a=0$, in which case we understand $\Cr$ as $C^N$.

\begin{theorem}[Normal forms for
non-uniformly contracting extensions]\label{NFext} $\;$ \\
Let $\f$ be an extension of $f$ satisfying Assumptions \ref{ass}. Suppose that 
\begin{equation}\label{crit}
N\ge1, \quad 0\le \a \le1 \quad \text {and} \quad N+\a> \chi_1 / \chi_\ell .
\end{equation}
Then there exist positive constants $L=L(N,\a)$ and  
$\e_*=\e_* (N,\a, \chi_1 , ... , \chi_\ell)$ so that for any $\,0<\e\le \e_*$  the following holds. 
 \vskip.1cm
 
 If there exists a positive measurable function $\sigma:\La \to  \R$ so that 
$1/\sigma$ is $\e$-tempered on $\La$ and $\f_x$ is $\Cr(B_{x,\sigma(x)})$ for all 
$x\in \La$ with the derivatives measurable in $x$ and with $\|\f_x \|_{\Cr}$ 
 $\e$-tempered on $\La$ then 
\vskip.2cm

\noindent {\bf (1)}  There exists a positive measurable  function 
$\rho :\La \to  \R$  which is $\kappa \e$-tempered on $\La$ and a measurable family $\{ \h_x\}_{x\in \La}$ of $\Cr$ diffeomorphisms   
 $\h_x : B_{x,\rho (x)} \to \E_x$  satisfying $\h_x(0)=0$ and $D_0 \h_x =\Id \,$
 which conjugate $\fe$ to a sub-resonance polynomial extension $\pe$: 
 $$\h_{fx} \circ \f_x =\p_x \circ \h_x, \; \text{ where }  \;  \p_x\in \S_{x,fx}
 \; \text{ for all } x\in \La.$$
 Moreover, $\|\h_x \|_{\Cr(B\,x,\rho(x))}$ is $L\e$-tempered 
 on $\La$ and $\|D^{(n)}_0\h_x\|$ is $n^2\e$-tempered on $\La$  for $n=1,...,N$, 
with respect to the $\e$-Lyapunov metric \eqref{Lprod}.

\vskip.2cm

\noindent {\bf (2)} Suppose  $\tilde \h=\{ \tilde \h_{x}\}_{x\in \La}$ is another 
measurable family of diffeomorphisms as in (1) conjugating $\f$ 
to a sub-resonance polynomial extension  $ \tilde \p$. 
Then for all $x \in \La$ there exists $G_x \in G_x^\chi$ 
which is measurable and tempered in $x$ such that 
$\h_x=G_x \circ \tilde \h_{x}$. Moreover, if 
$D^{(n)}_0\tilde \h_x=D^{(n)}_0\h_x$ for all $n=2,...,d= \lfloor \chi_1/\chi_\ell \rfloor$, then
$\h_x= \tilde \h_{x}$ for all $x \in \La$. In particular, $\{ \h_x\}_{x\in \La}$
is unique if $\,d=1$.

\vskip.2cm

\noindent  {\bf (3)} Let $g:X\to X$ be an invertible map commuting with $f$ and
let $\La'$ be a subset of $\La$ which is both $f$ and $g$ invariant.
Let $\g (x,t)=(g(x),\g_x(t))$ be an extension of $g$ to $\E$
which preserves the zero section and commutes with $\f$. 
Suppose that $\g_x$ is $\Cr(B_{x,\sigma(x)})$ for all $x\in \La'$ with 
the derivatives measurable in $x$, and that $\|\g_x \|_{\Cr}$ and
$\|(D_0\g_x)^{-1} \|$ are $\e$-tempered on $\La'$. Then
$ \h_{gx} \circ \g_x \circ \h_x^{-1} \in\ S_{x,fx}$ for all $x\in \La'$.

\end{theorem}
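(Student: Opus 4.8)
The plan is to carry out the non-stationary normal form scheme of Guysinsky--Katok \cite{GK}, in the smooth-dependence form of \cite{KS15}, on the set $\La$ of Lyapunov regular points, measuring all norms in the $\e$-Lyapunov metric. Here $\e\le\e_*$ is chosen small enough to be negligible against every spectral gap occurring below, and we repeatedly use two features of this metric: $F_x$ acts on $\E^i_x$ with norm and conorm in $[e^{\chi_i-\e},e^{\chi_i+\e}]$, and every $\e$-tempered function changes by at most $e^{\pm\e n}$ along $n$ steps of the orbit, so tempered quantities behave like constants in geometric estimates. The coordinate change of part (1) is built in two stages. In the \emph{algebraic stage}, since $\f_x$ fixes the zero section $D_0\f_x=F_x$ maps $\E^i_x$ onto $\E^i_{fx}$ and is already a sub-resonance map, so we keep the linear part; we then delete, one degree at a time for $k=2,\dots,d$ (with $d$ as in \eqref{degree}), those Taylor terms of $\f_x$ whose homogeneous types $s$ violate \eqref{sub-resonance}. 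For each such non-resonant type the required correction $\Id+h^{(s)}_x$ is the unique tempered solution of a twisted cohomological equation over $f$, solvable because $\chi_i>\sum s_j\chi_j$: its solution is a forward series whose $n$-th term is $O\big(e^{n(\sum s_j\chi_j-\chi_i)}\big)$, which converges in $\Cr$ by the $\e$-tempered bound on $\|\f_x\|_{\Cr}$, and lower-degree terms are untouched. After the finitely many corrections the new extension is $\tilde\f_x=\p_x+R_x$ with $\p_x\in\s_{x,fx}$ its sub-resonance $d$-jet; $R_x$ is $\Cr$ with vanishing $d$-jet, so Taylor's theorem gives $\|R_x(t)\|\le K(x)\|t\|^{q}$ with tempered $K$ and, by the criticality hypothesis \eqref{crit}, with $q>\chi_1/\chi_\ell$, and similarly for its derivatives. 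This is the only use of \eqref{crit}.

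For the \emph{analytic stage} write $\p^{(n)}_x=\p_{f^{n-1}x}\circ\cdots\circ\p_x$, $\tilde\f^{(n)}_x=\tilde\f_{f^{n-1}x}\circ\cdots\circ\tilde\f_x$, and set
\[
\h^{(2)}_x=\lim_{n\to\infty}\,(\p^{(n)}_x)^{-1}\circ\tilde\f^{(n)}_x .
\]
Since composites of sub-resonance polynomials are again sub-resonance of degree $\le d$ and $\tilde\f$ shares its $d$-jet with $\p$, every approximant has $d$-jet $\Id$; in particular $D_0\h^{(2)}_x=\Id$. Consecutive approximants differ by $(\p^{(n)}_x)^{-1}\circ\big[(\p_{f^nx})^{-1}\circ\tilde\f_{f^nx}-\Id\big]\circ\tilde\f^{(n)}_x$, where the bracket vanishes at $0$ to an order $>\chi_1/\chi_\ell$; precomposing with the $n$-fold contraction $\tilde\f^{(n)}_x$, whose derivative is $\sim e^{n(\chi_\ell+\e)}$, and postcomposing with $(\p^{(n)}_x)^{-1}$, whose derivative is $\sim e^{-n(\chi_1-\e)}$, makes the $C^0$ norm of this difference summable. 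For the $\Cr$ norm the binding case is the top order: differentiating the composition $N$ times and bounding the $\a$-H\"older constant of $D^{(N)}$ by the Fa\`a di Bruno formula produces a worst term $\sim e^{n((N+\a)\chi_\ell-\chi_1)}$, summable precisely by \eqref{crit} once $\e\le\e_*$. The remaining losses are absorbed using the $\e$-tempered bounds on $\|\tilde\f_x\|_{\Cr}$ and by restricting to a ball of radius $\rho(x)$ that is $\kappa\e$-tempered with $\kappa$ as in \eqref{kappa}, and the recursive derivative estimates then yield the stated $L\e$-temperedness of $\|\h_x\|_{\Cr}$ and $n^2\e$-temperedness of $\|D^{(n)}_0\h_x\|$. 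Composing $\h^{(2)}_x$ with the algebraic coordinate change gives $\h_x$ with $\h_{fx}\circ\f_x=\p_x\circ\h_x$, proving (1); all objects are built by countable operations on measurable families, so measurability is automatic. I expect this $\Cr$ telescoping estimate to be the main obstacle: one must keep every composition tempered while the jet norms, which may grow like $e^{n^2\e}$, race the fixed negative exponent $(N+\a)\chi_\ell-\chi_1$, and since only temperedness --- not integrability --- of $\|\f_x\|_{\Cr}$ is assumed, these sub-exponential factors have to be tracked by hand rather than absorbed by an ergodic theorem, which is where this argument departs from \cite{M}.

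For part (2), if $\tilde\h$ is another family as in (1), conjugating $\f$ to $\tilde\p$, then $G_x:=\h_x\circ\tilde\h_x^{-1}$ has $D_0G_x=\Id$ and $G_{fx}\circ\tilde\p_x=\p_x\circ G_x$. Comparing Taylor terms of degrees $2,\dots,d$, and noting (as in part (3)) that the non-resonant components of $G^{(k)}_x$ satisfy homogeneous twisted cohomological equations over $f$ and so vanish, one finds the $d$-jet of $G_x$ in $G_x^\chi$ and, in addition, $\tilde\p_x=\p_x$ whenever the $d$-jets of $\h_x$ and $\tilde\h_x$ agree. Factoring off that $d$-jet reduces matters to the key lemma: \emph{a tempered $\Cr$ family of local diffeomorphisms of trivial $d$-jet conjugating one sub-resonance polynomial extension to another is the identity, and the two polynomials coincide}. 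This lemma is proved by the telescoping of the analytic stage --- writing the family as $\Id+r_x$, iterating the conjugacy relation gives $\|r_x(t)\|\le\|D(\p^{(n)}_x)^{-1}\|\cdot\|r_{f^nx}(\p^{(n)}_x(t))\|\to0$ by \eqref{crit}, and likewise for derivatives. Hence $\h_x=G_x\circ\tilde\h_x$ with $G_x\in G_x^\chi$ measurable and tempered, and $\h_x=\tilde\h_x$ under the jet-matching hypothesis, which is vacuous when $d=1$.

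For part (3), set $\Phi_x:=\h_{gx}\circ\g_x\circ\h_x^{-1}$. Commutativity of $\g$ with $\f$ together with $\h_{fx}\circ\f_x=\p_x\circ\h_x$ gives $\Phi_{fx}\circ\p_x=\p_{gx}\circ\Phi_x$, and $D_0\Phi_x=D_0\g_x$ maps $\E^i_x$ onto $\E^i_{gx}$, so it is a linear sub-resonance map, invertible with tempered inverse since $\|(D_0\g_x)^{-1}\|$ is $\e$-tempered. Matching degrees $2,\dots,d$ in $\Phi_{fx}\circ\p_x=\p_{gx}\circ\Phi_x$ gives, for every non-resonant type $s$, a homogeneous twisted cohomological equation over $f$ whose only tempered solution is $0$: forward iteration contracts it at rate $e^{n(\sum s_j\chi_j-\chi_i)}$, while $\Phi_x$, built from the tempered $\h$ and $\g$, is tempered. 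Hence the $d$-jet $J_x$ of $\Phi_x$ lies in $\s_{x,gx}$. Being a sub-resonance polynomial, $J_x$ satisfies $J_{fx}\circ\p_x=\p_{gx}\circ J_x$ exactly (composites of sub-resonance polynomials have degree $\le d$, so nothing is lost on passing to $d$-jets), so $J_x^{-1}\circ\Phi_x$ has trivial $d$-jet and conjugates $\p$ to itself; the key lemma of part (2) forces $J_x^{-1}\circ\Phi_x=\Id$. Therefore $\Phi_x=J_x\in\s_{x,gx}$ for all $x\in\La'$, which is (3).
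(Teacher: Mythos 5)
Your proposal is correct and follows essentially the same route as the paper: the "algebraic stage" is the paper's degree-by-degree solution of the twisted cohomological equation on $\po^{(n)}/\s^{(n)}$ by a forward series, the "analytic stage" telescoping limit is exactly the iteration of the paper's contraction $T(\h)_x=\p_x^{-1}\circ\h_{fx}\circ\f_x$ on tempered sequence spaces with the same critical estimate $(N+\a)\chi_\ell<\chi_1$, and your key uniqueness lemma is the paper's uniqueness of the fixed point in the space of remainders with trivial jet. The only (cosmetic) differences are that the paper builds the full $N$-jet of $\h$ before the remainder, and proves part (3) by reducing to part (2) via $\tilde\h_x=(D_0\g_x)^{-1}\circ\h_{gx}\circ\g_x$ rather than rerunning the jet-matching argument for $\h_{gx}\circ\g_x\circ\h_x^{-1}$.
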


\begin{corollary} \label{Cinf}
Suppose that $\f_x$ is $\Ci(B_{x,\sigma(x)})$ and that $1/\sigma$ and 
$\|\f_x \|_{C^N}$ for each $n\in \N$ are $\e$-tempered on $\La$ for each 
$\e>0$. Then $\h_x$ in part (1) of Theorem \ref{NFext}  is $\Ci (B_{x,\rho (x)})$.
\end{corollary}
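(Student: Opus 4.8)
The plan is to run the construction in part (1) of Theorem~\ref{NFext} at every finite regularity level and then play the resulting coordinate changes off against each other using the uniqueness in part (2). Set $d=\lfloor\chi_1/\chi_\ell\rfloor$ and $N_0=d+1$. For each integer $N\ge N_0$ I would apply Theorem~\ref{NFext}(1) with this $N$, with $\a=0$ (so that $\Cr=C^N$, which is permitted since $N\ge2$), and with some $\e=\e_N$ not exceeding the threshold $\e_*$ of the theorem for the pair $(N,0)$ and chosen as small as convenient; this is legitimate because by hypothesis $1/\sigma$ and $\|\f_x\|_{C^n}$ are $\e$-tempered for \emph{every} $\e>0$. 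This produces a measurable family $\h^{(N)}=\{\h^{(N)}_x\}$ of $C^N$ diffeomorphisms of $B_{x,\rho_N(x)}$, fixing $0$ with derivative $\Id$, conjugating $\f$ to a sub-resonance polynomial extension $\p^{(N)}$, with $\rho_N$ being $\kappa\e_N=4\e_N$-tempered. I fix $\h:=\h^{(N_0)}$, $\p:=\p^{(N_0)}$, $\rho:=\rho_{N_0}$; this is the family from part (1) referred to in the corollary, and the goal is $\h_x\in\CI(B_{x,\rho(x)})$.

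For $N>N_0$ the family $\h^{(N)}$ also satisfies the conclusions of part (1) at level $N_0$: it is $C^{N_0}$, fixes $0$ with derivative $\Id$, conjugates $\f$ to a sub-resonance polynomial extension, and since the $C^{N_0}$ norm and the jets $\|D^{(n)}_0\h^{(N)}_x\|$ for $n\le N_0$ are dominated by the corresponding $C^N$ quantities, they are tempered with the rates demanded in (1). Hence Theorem~\ref{NFext}(2) applies and yields, for each $x\in\La$, an element $G_x=G_x^{(N)}\in G_x^\chi$, measurable and tempered in $x$, with $\h_x=G_x\circ\h^{(N)}_x$ on $B_{x,\rho'_N(x)}$, where $\rho'_N(x):=\min(\rho_{N_0}(x),\rho_N(x))$. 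A sub-resonance polynomial has degree at most $d$ by \eqref{degree}, so $G_x$ is a polynomial map, in particular $\CI$; composing it with $\h^{(N)}_x\in C^N$ shows $\h_x\in C^N(B_{x,\rho'_N(x)})$. Thus $\h_x$ is $C^N$ on a (possibly very small) ball about the origin, for every $N\ge N_0$.

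It remains to propagate this regularity to the whole domain $B_{x,\rho(x)}$ as $N\to\infty$. Iterating the conjugacy $\h_x=\p_x^{-1}\circ\h_{fx}\circ\f_x$ gives
\begin{equation}\label{Cinf-iter}
\h_x=\big(\p_x\big)^{-1}\circ\cdots\circ\big(\p_{f^{n-1}x}\big)^{-1}\circ\h_{f^nx}\circ\f^n_x,
\qquad \f^n_x:=\f_{f^{n-1}x}\circ\cdots\circ\f_x,
\end{equation}
valid near any $p\in B_{x,\rho(x)}$ once $n$ is large, since $\f$ contracts the fibers: $\|\f^n_x(p)\|\le C(x)e^{(\chi_\ell+\e)n}\|p\|$ in the $\e$-Lyapunov metric. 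The maps $\p^{-1}_{f^jx}$ (again sub-resonance polynomials) and $\f_{f^jx}$ are $\CI$, while $\rho'_N$ is $4\e_N$-tempered, so $\rho'_N(f^nx)\ge c(x)e^{-4\e_N n}\rho'_N(x)$. Choosing $\e$ and $\e_N$ small enough that $\chi_\ell+\e<-4\e_N$ — possible precisely because $\kappa=4$ does not grow with $N$ — forces $\f^n_x$ to carry a neighborhood of $p$ into $B_{f^nx,\rho'_N(f^nx)}$ for all large $n$; then \eqref{Cinf-iter} together with the previous paragraph exhibits $\h_x$ as $C^N$ near $p$. As $p$ and $N$ were arbitrary, $\h_x\in\CI(B_{x,\rho(x)})$; and by part (2) any family satisfying the conclusions of part (1) for some admissible $N$ differs from $\h$ by a tempered family of sub-resonance polynomials, hence is $\CI$ too.

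The main obstacle is this last step: one must ensure the exponential contraction of $\f^n_x$ genuinely outruns the merely sub-exponential shrinking of the nested domains $\rho'_N(f^nx)$, uniformly along the sequence $N\to\infty$. This is exactly where it matters that the temperedness exponent $\kappa$ of the domain stays bounded (equal to $4$) as the regularity order grows, and that the $\CI$ temperedness hypothesis lets one take $\e$ and all the $\e_N$ arbitrarily small. If the construction behind part (1) is sufficiently explicit, a shorter route would be to check directly that for $\CI$ input the estimates close at every order, producing a $\CI$ coordinate change with no comparison argument at all.
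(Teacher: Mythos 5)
Your proposal is correct and follows essentially the same route as the paper: run the construction of part (1) at every finite order $N$ and use the rigidity of part (2) to identify the resulting families, concluding that the single family is $C^N$ for every $N$. The paper streamlines this by fixing the Taylor polynomials of degree $d$ once and for all so that the families coincide exactly by the uniqueness clause of part (2), whereas you let them differ by a sub-resonance polynomial $G_x$ (harmless for smoothness, since $G_x$ has degree at most $d$) and then propagate the regularity over the full ball by invariance --- a step the paper leaves implicit.
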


\vskip.2cm 
\noindent {\bf Normal forms on stable manifolds}.
Let $\M$ be a smooth manifold and let $f$ be a diffeomorphism of $\,\M$ 
preserving an ergodic Borel probability measure $\mu$.
We assume that $f$ is $\Cr$, that is $C^N$ with $N^{th}$ derivative 
$\a$-H\"older on $\M$.
We denote by $\La$ the full measure set of Lyapunov regular points for $(f,\mu)$.
Let $\chi_1<\dots<\chi_{\ell'}$ be the Lyapunov exponents of $(f,\mu)$ and 
suppose $\ell$ is such that $\chi_{\ell}<0$.  Then for each $x\in \La$ there exists
the (strong) stable manifold $W_{x}$  tangent to $\E_x=\E^1_x \oplus ... \oplus \E^\ell_x$\, \cite[Theorem 6.1]{R}.
 
\begin{theorem} [Normal forms on stable manifolds] \label{NFfol} 
Let $\M$ be a smooth manifold and let $f$ be a $\Cr$ diffeomorphism of 
$\,\M$  preserving an ergodic Borel probability measure $\mu$. 
 Suppose that $N\ge 1$, $0<\a\le1$ and $N+\a>  \chi_1/\chi_\ell$.
Then there exist a full measure set $X$ which consists of full stable 
manifolds $W_x$ and
a measurable family $\{ \h_x \} _{x\in X}$ of $\Cr$ diffeomorphisms  
 $$\h_x: W_x \to \E_x =T_xW_x \quad \text { such that }$$

\noindent  {\bf (i)} $\;\,\p_x =\h_{fx} \circ f \circ \h_x ^{-1}:\,\E_{x} \to \E_{fx}\,$ is a sub-resonance polynomial map for each $x \in X$,
\vskip.15cm

\noindent  {\bf (ii)}  $\,\h_x(x)=0$ and $D_x\h_x $ is the identity map  for each $x \in X$,
\vskip.15cm

\noindent  {\bf (iii)} $\| \h_x\|_{\Cr}$ is tempered on $X$,
 \vskip.15cm 
 
\noindent  {\bf (iv)} $\,\h_y \circ \h_x^{-1} : \E_x \to \E_y$ 
is a sub-resonance polynomial map for all $x \in X$ and $y \in W_x$,
 \vskip.15cm 
 
\noindent  {\bf (v)} If $g:\M\to \M$ is a $\Cr$ diffeomorphism commuting with $f$ 
which preserves the measure class of $\mu$ then
$ \h_{gx} \circ \g_x \circ \h_x^{-1} : \E_{x} \to \E_{gx}$ is a sub-resonance polynomial map  for all $x$ in a full measure set $X'$ which consists of full stable 
manifolds.

\end{theorem}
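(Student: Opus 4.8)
The plan is to derive Theorem \ref{NFfol} from Theorem \ref{NFext} by realizing the dynamics on the stable foliation as a non-uniform contraction extension, and then upgrading the almost-everywhere statement to a full-leaf statement. First I would set up the extension: fix a measurable family of smooth charts identifying a neighborhood of $x$ in $W_x$ with a ball in $\E_x = T_x W_x$, using the Lyapunov metric on $\E_x$; since $f$ contracts $W_x$, the map $\f_x = f|_{W_x}$ expressed in these charts is a smooth map $B_{x,\sigma(x)} \to \E_{fx}$ fixing $0$, whose derivative at $0$ is $D_xf|_{\E_x}$, which has exactly the negative Lyapunov exponents $\chi_1 < \dots < \chi_\ell$. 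The standard Pesin-theory estimates (temperedness of the size of Pesin charts and of the $C^{N,\a}$ norms of $f$ in these charts, following from $f$ being $C^{N,\a}$ on the compact manifold $\M$ and Lusin regularity) give exactly the hypotheses of Theorem \ref{NFext}: $1/\sigma$ is $\e$-tempered and $\|\f_x\|_{\Cr}$ is $\e$-tempered on $\La$, for any preassigned $\e>0$, so in particular for $\e \le \e_*$. Applying Theorem \ref{NFext}(1) produces, on a full-measure set $\La_0 \subseteq \La$, the coordinate changes $\h_x : B_{x,\rho(x)} \to \E_x$ conjugating $\f_x$ to a sub-resonance polynomial $\p_x$; composing with the chart and translating so that $\h_x(x) = 0$, $D_x\h_x = \Id$, we obtain (i), (ii), (iii).

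The key point, and the one requiring real work, is (iv): extending $\h_x$ from the small ball $B_{x,\rho(x)}$ to the \emph{entire} leaf $W_x$ for a.e. $x$, and showing the transition maps $\h_y \circ \h_x^{-1}$ lie in the sub-resonance group. The mechanism is the usual one for non-stationary normal forms: for $y \in W_x$, both $x$ and $y$ have the same forward orbit behavior, and one defines the global extension by pulling back through the dynamics. Concretely, because $W_x = W_y$ and $f^n$ contracts $W_x$ into an arbitrarily small neighborhood of $f^n x$, for large $n$ the points $f^n x$ and $f^n y$ both lie in $B_{f^n x, \rho(f^n x)}$ (using temperedness of $1/\rho$ along the orbit and the contraction rate); then set
\begin{equation}\label{globalext}
\h_x := (\p_{x}^{(n)})^{-1} \circ \h_{f^n x} \circ f^n \quad \text{on } W_x,
\end{equation}
where $\p_x^{(n)} = \p_{f^{n-1}x} \circ \dots \circ \p_x$ is a sub-resonance polynomial (the group $G^\chi$ being closed under composition). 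One checks this is independent of $n$ (for $n$ large enough) using the conjugacy relation from part (1), so it gives a well-defined $\Cr$ (indeed, on the leaf, as regular as the leaf allows) map $\h_x : W_x \to \E_x$ agreeing with the local $\h_x$ near $x$. For $y \in W_x$, applying \eqref{globalext} at both points with a common large $n$ and using $W_x = W_y$ gives $\h_y \circ \h_x^{-1} = (\p_y^{(n)})^{-1} \circ (\p_x^{(n)}) $ composed with the transition map $\h_{f^n y} \circ \h_{f^n x}^{-1}$ near $f^n x$; the latter is a sub-resonance polynomial by the local uniqueness statement Theorem \ref{NFext}(2) (two normal form coordinate systems at nearby points of the same leaf differ by an element of $G^\chi$), and since $G^\chi$ is a group, the composition is again in $\s_{x,y}$. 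The full-measure set $X$ of complete leaves is obtained by taking $X = \bigcup_{n} f^{-n}(\bigcap_{k\ge 0} f^k \La_0)$ intersected appropriately, or more simply by saturating a full-measure subset of $\La_0$ on which all the temperedness constants are controlled — this requires checking that the saturation by stable leaves of a full-measure set is still full measure, which holds by absolute continuity of the stable foliation (Pesin theory).

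For (v), I would apply Theorem \ref{NFext}(3) with $g$ in place of the commuting map: the hypothesis that $g$ is $\Cr$ on the compact manifold and preserves the measure class of $\mu$ guarantees, again by Pesin-theoretic Lusin regularity, that the extension $\g_x = g|_{W_x}$ in Pesin charts satisfies the temperedness requirements ($\|\g_x\|_{\Cr}$ and $\|(D_0\g_x)^{-1}\|$ are $\e$-tempered on a full-measure $g$- and $f$-invariant set $\La' \subseteq \La$), and that $g$ maps stable manifolds to stable manifolds, so $\g_x : W_x \to W_{gx}$ is the relevant fiber map. Note one must be slightly careful that $g$ need not preserve $\mu$ itself, only its class; but the set of Lyapunov-regular points and the Pesin block structure are defined by $\mu$-a.e. conditions that are preserved under passing to an equivalent measure, and the forward-temperedness estimates we need hold on a full-measure set. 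Theorem \ref{NFext}(3) then gives $\h_{gx} \circ \g_x \circ \h_x^{-1} \in \s_{x,gx}$ on $\La'$, and saturating by stable leaves (again using absolute continuity) produces the full-measure union of complete leaves $X'$. The main obstacle, as indicated, is step (iv): making the pullback definition \eqref{globalext} rigorous requires quantitative control — that $f^n$ drives any pair of points on a leaf into the shrinking balls $B_{f^n x, \rho(f^n x)}$ before the tempered growth of $1/\rho$ catches up — which is exactly where one needs both the sub-exponential ($\kappa\e$-tempered) control on $\rho$ from part (1) and the genuine exponential contraction rate $\chi_\ell < 0$ along $W$, with $\e_*$ chosen small enough relative to $|\chi_\ell|$.
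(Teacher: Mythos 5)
Your reduction to Theorem \ref{NFext} for parts (i)--(iii) and (v) matches the paper's, but your treatment of (iv) has a genuine gap at its central step. You claim that $\h_{f^n y}\circ\h_{f^n x}^{-1}$ is a sub-resonance polynomial ``by the local uniqueness statement Theorem \ref{NFext}(2) (two normal form coordinate systems at nearby points of the same leaf differ by an element of $G^\chi$).'' That is not what part (2) says: it compares two families $\h$ and $\tilde\h$ of coordinate changes based at the \emph{same} point and with the \emph{same} normalization $\tilde\h_x(0)=0$, $D_0\tilde\h_x=\Id$. The map $\h_y$, transported to $\E_x$, satisfies $\h_y(x)=\bar x\ne 0$ and $D_x\h_y\ne\Id$, so it is not a competing normalized coordinate change at $x$, and part (2) does not apply. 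The assertion that transition maps between normal form charts centered at two different points of a leaf lie in the sub-resonance group is precisely statement (iv) itself; your argument is circular at this point. To make a uniqueness-based route work you would at minimum have to (a) first prove that $D_x\h_y$ is a sub-resonance (block-triangular) linear map --- in the paper this is a separate argument via consistency of the fast subfoliations --- and (b) verify the temperedness hypotheses for the renormalized family. The paper instead proves (iv) directly by a two-stage growth-rate argument: an induction on the order of the Taylor terms of $\g_{x,y}=\h_y\circ\h_x^{-1}$, deriving a contradiction from any non-sub-resonance term by comparing $e^{n(\chi_i-\e)}$ lower bounds with $e^{n\sum s_j(\chi_j+\e)}$ upper bounds along the orbit, followed by a second estimate showing $\g_{y,x}$ equals its Taylor polynomial, using that the remainder is $O(\|t\|^{N+\a})$ and $(N+\a)\chi_\ell<\chi_1$.

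A second, related omission: your construction only produces $\h_y$ for Lyapunov-regular $y$, but the theorem requires $\h_y$ for \emph{every} $y\in W_x$, and a.e.\ leaf contains non-regular points where the Lyapunov splitting $\E_y^1\oplus\dots\oplus\E_y^\ell$ need not exist. Your pullback formula cannot be run at such $y$ because $\p_y$ and $\h_{f^ny}$ are not yet defined there. The paper's resolution is the observation that the notion of sub-resonance polynomial depends only on the fast flag $\V^1_y\subset\dots\subset\V^\ell_y$, which is defined and H\"older continuous along the whole leaf; one then reruns the fixed-point construction at $y$ using that the relevant operators are H\"older-close to those at the nearby regular point $x$ and hence still contractions. (Minor point: no absolute continuity is needed to see that the stable saturation of a full-measure set has full measure --- it contains that set.)
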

 
Another way to interpret (iv) is to view $\h_x$ as a coordinate chart on 
$W_x$ identifying it with $\E_x$. 
 In this coordinate chart, (iv) yields that all transition maps $\h_y \circ \h_x^{-1}$ for $y\in W_x$ are in  the group $\bar G_x^\chi$ generated by $G_x^\chi$ and 
 the translations of $\E_x$. Thus $\h_x$ 
gives the leaf a structure of homogeneous space $W_x\sim \bar G_x^\chi/G_x^\chi$,  
which is consistent with other coordinate charts $\h_y$ for $y\in W_x$ 
and is preserved by the normal form $\p _x$ by (i).

 \begin{corollary} \label{1/2pinch}
 Under the assumptions of the Theorem \ref{NFfol}, if $d= \lfloor \chi_1/\chi_\ell \rfloor =1$, i.e. $2\chi_\ell<\chi_1$, then  $\p_x$ is the linear map $Df|_{\E_x}$, 
  the family $\{ \h_x \} _{x\in X}$ satisfying (ii) and (iii) is unique,
 the maps $\h_y \circ \h_x^{-1} : \E_x \to \E_y$ are affine for all $x \in X$ and 
$y \in W_x$, and $\h_y$ depends $C^N$-smoothly on $y$ along the stable manifolds.
 \end{corollary}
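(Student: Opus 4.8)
The plan is to use that, when $d=\lfloor\chi_1/\chi_\ell\rfloor=1$, the degree bound \eqref{degree} forces every sub-resonance polynomial map to be affine, and to be linear if it fixes the origin; given this, all four assertions become essentially formal. For the first one, by (i) the normal form $\p_x$ is sub-resonance, and by (ii) $\p_x(0)=\h_{fx}(f(\h_x^{-1}(0)))=\h_{fx}(fx)=0$, so $\p_x$ is linear and therefore equals $D_0\p_x$; the chain rule together with $D_x\h_x=\Id$ and $D_{fx}\h_{fx}=\Id$ from (ii) gives $D_0\p_x=D_{fx}\h_{fx}\circ Df|_{\E_x}\circ(D_x\h_x)^{-1}=Df|_{\E_x}$, so $\p_x=Df|_{\E_x}$.

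For uniqueness I would pass to the extension setting, where Theorem \ref{NFfol} is proved, and apply Theorem \ref{NFext}(2): a second family $\{\tilde\h_x\}$ satisfying (i)--(iii) fits the hypotheses of (2), and since $d=1$ the comparison range $n=2,\dots,d$ is empty, so (2) yields $\h_x=\tilde\h_x$ for all $x$; equivalently, (2) produces $G_x\in G_x^\chi$ with $\h_x=G_x\circ\tilde\h_x$, and since $G_x^\chi$ consists of linear maps when $d=1$, differentiating at $0$ and using $D_0\h_x=D_0\tilde\h_x=\Id$ forces $G_x=\Id$. By (iv) each transition map $\h_y\circ\h_x^{-1}:\E_x\to\E_y$ is sub-resonance, hence affine; differentiating it at $s=\h_x(y)$ and using (ii) and the inverse function theorem shows its linear part is $(D_y\h_x)^{-1}$, while it sends $\h_x(y)$ to $\h_y(y)=0$, so
$$\h_y(t)=(D_y\h_x)^{-1}\bigl(\h_x(t)-\h_x(y)\bigr)\qquad\text{for all }x\in X,\ y,t\in W_x.$$

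It remains to extract the $C^N$ dependence on $y$. Here I would fix $x_0\in X$, set $W=W_{x_0}$ (note $W\subseteq X$), and read the displayed formula as saying that in the global $\Cr$ chart $\h_{x_0}$ of $W$ the normal-form map $\h_y$ is the affine map $s\mapsto(D_y\h_{x_0})^{-1}(s-\h_{x_0}(y))$, i.e. up to the frame $D_y\h_{x_0}$ of $T_yW$ it is translation by $-\h_{x_0}(y)$; since $y\mapsto\h_{x_0}(y)$ is $\Cr$, this yields the claimed $C^N$ dependence of $\h_y$, and then composition with the affine transition maps gives the $C^N$ dependence of $\h_y\circ\h_x^{-1}$ as well. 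I expect this last point to be the only genuine obstacle: the naive differentiation of the formula involves $D_y\h_{x_0}$, which a priori is only $C^{N-1,\alpha}$ in $y$, so to reach the full $C^N$ one must organize the computation in a frame of $TW$ compatible with the $\h_{x_0}$-chart, or alternatively bootstrap using the equivariance $\h_{f^n y}\circ f^n=(D_yf^n|_{\E_y})\circ\h_y$ and the exponential contraction of $f$ along $W$. Once $d=1$ collapses sub-resonance polynomials to affine maps, the other three assertions are immediate.
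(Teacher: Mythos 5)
Your proposal is correct and follows essentially the same route as the paper: $d=1$ collapses sub-resonance polynomials to linear/affine maps, uniqueness comes from Theorem \ref{NFext}(2) with the range $n=2,\dots,d$ empty, and the $C^N$ dependence is read off by identifying $W_x$ with $\E_x$ via $\h_{x_0}$, in which chart (and compatible frame $D_y\h_{x_0}$ of $T_yW$) the map $\h_y$ is translation by $-\h_{x_0}(y)$. Your explicit formula $\h_y(t)=(D_y\h_x)^{-1}(\h_x(t)-\h_x(y))$ and your remark about organizing the computation in the frame induced by the chart are precisely the (terser) argument the paper gives.
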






\section{ Lyapunov exponents and Lyapunov norm}\label{preliminaries}

In this section we review some basic definitions and facts of the 
Oseledets theory of linear extensions. We use \cite{BP} as a general reference.
For  a linear extension $F$ of a map $f$ we will use the notation
\begin{equation}  \label{gen+}
F^n_x= F_{f^{n-1}x} \circ \cdots \circ F_{fx} \circ  F_x.
\end{equation}

\begin{theorem} [Oseledets Multiplicative Ergodic Theorem, see \cite{BP} Theorem 3.4.3] 
\label{MET} $\;$ \\
Let $f$ be an invertible ergodic measure-preserving transformation of a Lebesgue probability space $(X,\mu)$. Let $F$ be a measurable linear extension satisfying $\log\|F_x\|\in L^1(X,\mu)$ and $\log\|F_x^{-1}\|\in L^1(X,\mu)$. Then there exist numbers $\chi_1 < \dots < \chi_{\ell}$, an $f$-invariant set $\La$ with $\mu (\La)=1$, and an $F$-invariant Lyapunov decomposition 
$$
\E_x= \E^{1}_x\oplus\dots\oplus \E^{\ell}_x\; \text{ for }x\in \La
$$
 such that 
 \vskip.1cm
 \begin{itemize}
\item[(i)] $ \underset{n\to{\pm \infty}}{\lim} n^{-1} \log\| F^n_x v \|=  \chi_i\, $  for any $i=1,...,\ell$ and any $\,0 \not= v\in \E^{i}_x$, and
 \vskip.1cm
\item[(ii)]  $ \underset{n\to{\pm \infty}}{\lim} n^{-1}  \log\det F^n_x= \sum_{i=1}^{\ell}m_i  \chi_i $,
where $m_i=\dim \E^{i}_x$. 
\end{itemize}
\end{theorem}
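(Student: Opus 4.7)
The plan is to establish the Oseledets theorem by the standard two-stage argument: first produce the exponents and an invariant filtration via Kingman's subadditive ergodic theorem, then exploit invertibility to refine the filtration into a direct-sum decomposition.

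First I would observe that $a_n(x) := \log \|F^n_x\|$ is subadditive along $f$-orbits, and the hypothesis $\log\|F_x\| \in L^1(X,\mu)$ places us in the setting of Kingman's subadditive ergodic theorem. This yields an $f$-invariant, hence a.e.\ constant, limit $\chi_\ell := \lim_n n^{-1}a_n(x)$, the top exponent. Applying the same machinery to the extensions induced on each exterior power $\Lambda^k \E$, $k=1,\dots,m$, produces numbers $\lambda_k$ whose successive differences, suitably reordered, recover the full list of distinct exponents $\chi_1 < \dots < \chi_\ell$ together with their multiplicities $m_i$.

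Next I would define the forward filtration
\begin{equation*}
V^i_x := \{\, v \in \E_x : \limsup_n n^{-1}\log \|F^n_x v\| \le \chi_i \,\}.
\end{equation*}
These are $F$-invariant subspaces; their dimensions are $f$-invariant measurable, hence a.e.\ constant by ergodicity, and match $m_1 + \dots + m_i$ by a dimension count against the exterior-power growth rates. Invertibility of $f$ and $F$ allows running the same machinery backwards, producing a complementary filtration $\tilde V^i_x$ on a full-measure $f$-invariant set $\La$. The Lyapunov subspaces are then the intersections $\E^i_x := V^i_x \cap \tilde V^i_x$ (with the appropriate reverse indexing on the second factor); transversality together with the dimension count yields the splitting $\E_x = \bigoplus_i \E^i_x$, and combining the two one-sided estimates on each $\E^i_x$ upgrades the $\limsup$ to a genuine two-sided limit equal to $\chi_i$, establishing (i).

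Claim (ii) then follows from Birkhoff's ergodic theorem applied to $\log|\det F_x|$, which is integrable because both $\log\|F_x^{\pm 1}\|$ are; the resulting limit $\int \log|\det F_x|\,d\mu$ matches $\sum_i m_i \chi_i$ through the singular-value factorization underlying the $\Lambda^m$ computation. I expect the main obstacle to lie in the transversality step: converting the two one-sided filtrations into a genuine splitting on which the exponential growth rates are attained as limits and not merely as $\limsup$'s. The cleanest route is Raghunathan's convergence theorem, giving $((F^n_x)^*F^n_x)^{1/(2n)} \to \La_x$ for a measurable family of positive symmetric operators whose eigenspaces and eigenvalues furnish the splitting and the exponents directly; measurability of $x \mapsto \E^i_x$ then follows from this description.
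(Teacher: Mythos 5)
The paper does not prove this statement at all: it is quoted as a known result from Barreira--Pesin \cite{BP}, and your outline is precisely the standard argument given in such references (Kingman's subadditive theorem applied on exterior powers to produce the exponents and multiplicities, forward and backward filtrations intersected on a full-measure invariant set to produce the invariant splitting and upgrade $\limsup$ to $\lim$, and Birkhoff applied to $\log|\det F_x|$ for (ii), which is integrable since $|\det F_x|^{\pm1}\le\|F_x^{\pm1}\|^m$). One small caution on your closing remark: the eigenspaces of the one-sided limit $\La_x=\lim\bigl((F^n_x)^*F^n_x\bigr)^{1/(2n)}$ are \emph{not} $F$-invariant and furnish only the forward filtration $V^i_x$ (as sums of eigenspaces with eigenvalue at most $e^{\chi_i}$), so Raghunathan's theorem supplies measurability, the exponents, and the dimension count, but the invariant spaces $\E^i_x$ must still come from intersecting the forward and backward filtrations exactly as you describe earlier in the proposal.
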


\noindent The numbers $\chi_1,\dots,\chi_{\ell}$
are called the {\it Lyapunov exponents} of $F$ and 
the points of the set $\rm$
are called {\it regular}. 

\vskip.2cm


We denote the standard scalar product in $\Rm$ by $\langle\cdot,\cdot \rangle$. For a fixed $\e >0$ and a regular point $x$,   the {\it $\e$-Lyapunov 
scalar product (or metric)}  $\langle \cdot,\cdot \rangle_{x,\e}$ in $\E_x=\Rm$ 
is defined as follows.
For $u\in \E^i_x$ and $v\in \E^j_x$ with $i\neq j,\,$  
 $\langle u,v \rangle_{x,\e} :=0$, and for $i=1,\dots,\ell$ and $u,v\in \E^i_x,$  
\begin{equation}  \label{Lprod}
\langle u,v \rangle_{x,\e}  =m \,\sum_{n\in\Z}\, \langle  F_x^n (u) ,
F_x^n(v) \rangle \,\exp(-2\chi_i n -\e |n|). 
\end{equation}
Note that the series converges exponentially for any regular $x$. The constant $m$ in front of
the conventional formula is introduced for more convenient comparison with 
the standard scalar product. 
Usually,  $\e$ will be fixed and we will denote $\langle \cdot,\cdot \rangle_{x,\e}$
simply by $\langle \cdot,\cdot \rangle_x$ and call it the {\it Lyapunov scalar product}. The norm 
generated by this scalar product is called the {\em Lyapunov norm}
and is denoted by $\|\cdot\|_{x,\e}$ or $\|\cdot\|_x$.

\vskip.1cm

Below we summarize the basic properties of the Lyapunov scalar product 
and norm, for more details see  \cite[Sections 3.5.1-3.5.3]{BP}. A direct calculation 
shows \cite[Theorem 3.5.5]{BP} that for any regular $x$ and any $u\in \E^i_x$ 
\begin{equation}  \label{estAEi}
\exp(n \chi_i -\e |n|) \,\|u\|_{x,\e} \le 
\| F_x^n(u)\|_{f^n x,\e} \le
\exp(n \chi_i+\e |n|)\,\|u\|_{x,\e} \quad \text{for all }n\in \Z,
\end{equation}
\begin{equation}  \label{estAnorm}
\exp(n \chi_\ell -\e n) \le \| F^n_x \|_{f^n x \leftarrow x} \le \exp(n \chi_\ell+\e n) 
\quad \text{for all }n \in \N,
\end{equation}
where 
$\| \cdot \|_{f^n x \leftarrow x}$ is the operator norm with respect to the
Lyapunov norms. It is defined for any points $x,y\in \La$ and any 
linear map $F:\E_x \to \E_y$ as follows
$$
\| F \| _{y\leftarrow x}=\sup \, \{ \| Fu \|_{y,\e}:  \; u\in \E_x, \;\, \| u\|_{x,\e}=1 \}.
$$
We emphasize that Lyapunov scalar product and norm are defined only for regular points and depend  measurably on the point. Thus, a comparison 
with the standard norm is important. The uniform lower bound follows easily from the definition: $\|u\|_{x,\e}\ge \|u\|$. The upper bound is not uniform,
but it changes slowly along the regular orbits 
\cite[Proposition 3.5.8]{BP}: there exists a 
measurable function $K_\e (x)$ defined on the set of regular points $\La$ 
such that 
\begin{equation}  \label{estLnorm}
\| u \| \le  \| u \|_{x,\e} \le K_\e(x) \|u\| \quad \text{for all } x \in \La 
\text{ and } u \in \E_x, \quad \text{and} 
\end{equation}
\begin{equation}  \label{estK}
 K_\e(x) e^{-\e |n|}  \le K_\e(f^n x) \le  K_\e(x) e^{\e |n|}  \quad 
\text{for all }  x \in\La \text{ and } n \in \Z.
\end{equation}
These estimates are obtained in \cite{BP} using the fact that 
$\|u\|_{x,\e}$ is {\em tempered}, but they can also be verified directly using the definition of $\|u\|_{x,\e}$ on each Lyapunov space and noting that angles between the spaces change slowly. 

Using \eqref{estLnorm} we obtain that for any point $x,y\in \La$ and any linear map $F:\E_x \to \E_y$ 
\begin{equation}  \label{estMnorm}
K_\e (x) ^{-1} \| F \| \le \| F \|_{y\leftarrow x} \le K_\e (y) \| F \| \, .
\end{equation}
When $\e$ is fixed we will usually omit it and write $K(x)=K_\e (x)$ and 
$\|u\|_{x} =\|u\|_{x,\e}$.
\vskip.1cm

Similarly, we will consider the Lyapunov norm of a homogeneous 
polynomial map  $R:\E_x\to \E_{y}$ of order $n$ defined as
\begin{equation}\label{normDef}
 \|R\|_{y \leftarrow x }=\sup\,\{\,\|R(u)\|_{y,\e}:\; u\in \E_x, \;  \|u\|_{x,\e} =1  \,\}.
\end{equation}
 It follows
that
\begin{equation}\label{normP}
 \|\, R \circ P \,\|\leq \|R\|\cdot \|P\|^n.
\end{equation}
For a homogeneous polynomial map $P:\E_x\to \E_{y}$ of order $n$ we have
\begin{equation}\label {estPol}
 K_\e(x)^n \|P\| \le \|P\|_{y\leftarrow x} \le  K_\e(y) \|P \|.
 \end{equation}
This formula allows us to switch between the standard and Lyapunov norms
in spaces of polynomials and smooth functions.

\vskip.3cm


\section{Proof of Theorem \ref{NFext}} 
We give the proof for the case $\a>0$. The proof for $\a=0$ with
$N\ge2$ is similar but avoids difficulties of estimating the H\"older 
constant at $0$. 

We denote $\,\f_x^n = \f_{f^{n-1}x} \circ \dots \circ \f_{fx}\circ \f_x$.
We take $L=\max\, \{\kappa, M+N^3+2N^2 \}$, where $M=M(d)$ is
chosen to satisfy \eqref{Pn est}.
We set $\e_*=\e_0/ 3(N+1),$ where
\begin{equation}\label{epsilon}
\begin{aligned}
&\e_0=\min\,\{\, \nu/(2L+4(N+1+\a)),\; -\chi_\ell / (2L+2),\;-\lambda/(N^2+N+1)  \,\}>0,\\
&\text{where}\quad \nu=\chi_1-(N+\alpha)\chi_\ell>0 \quad\text{and }\,
\lambda<0 \,\text { is given by } \eqref{lambda}.
\end{aligned}
\end{equation}
We fix $\e<\e_0$ and  let $K=K_\e$ be as in \eqref{estLnorm}.
Since $\|\f_x \|_{\Cr}$ is $\e$-tempered, there is a function 
$C : \La \to [1,\infty)$ such that for all $x\in \La$ and $n\in \N$
\begin{equation} \label{estC}
\|\f_x \|_{\Cr} \le C(x)  \quad \text{and}\quad C(f^nx)\le e^{n\e} C(x).
\end{equation} 
Similarly, replacing $\sigma $ by a smaller function if necessary, 
we can assume that it satisfies
\begin{equation} \label{est sig}
\sigma : \La \to (0,1] \qquad  \text{and}\quad \sigma(f^nx)\ge e^{-n\e} \sigma(x).
\end{equation} 

\begin{lemma} \label{4.1}
Under the assumptions of Theorem \ref{NFext}, 
there exists a function $\rho  : \La \to  \R_+$ such that for all $x\in \La$, $n\in \N$, and $\,t\in B_{x, \rho(x)} \subset \E_x$, we have  $\rho (x) < \sigma (x) \le1$
and 
\begin{itemize}

\item[(1)] $\,\rho (f^nx)\ge e^{-\kappa\e n} \rho(x)$,  where $\kappa$ is given by \eqref{kappa},
\vskip.2cm

\item[(2)]  $\,\| D_t\,\f^n_x \|_{f^nx \leftarrow x} \le e^{(\chi_\ell+2\e)n}, $
 
\vskip.2cm

\item[(3)]  $\,\| D_t\,\f^n_x \| \le K(x)\, e^{(\chi_\ell+2\e)n}\, $,

\vskip.2cm
\item[(4)]  $\,\| \f_x^n(t) \| \le K(x)\, e^{(\chi_\ell+2\e)n} \|t\|$,

\vskip.2cm
\item[(5)]  $\,\| \f_x^n(t) \|_{f^nx} \le e^{(\chi_\ell+2\e)n} \|t\|_x$.
\end{itemize}
\end{lemma}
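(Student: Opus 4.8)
The plan is to build the radius function $\rho$ by starting from $\sigma$ and shrinking it so that all five estimates hold, then close the argument by induction on $n$. The natural candidate is $\rho(x) = c\, K(x)^{-p}\,\sigma(x)$ for suitable constants $c\le 1$ and $p\ge 0$; since $K$ satisfies the slow-growth bound \eqref{estK} and $\sigma$ satisfies \eqref{est sig}, such a $\rho$ automatically satisfies (1) with $\kappa = 1 + p$, provided we choose $p$ compatible with the value of $\kappa$ in \eqref{kappa} (so $p = 3/\a$ when $N=1$ and $p=3$ when $N\ge 2$). The $K(x)$ factor is forced on us because when we pass from the Lyapunov norm to the standard norm in order to control where the orbit of $t$ lives (so that $\f_x^n(t)$ stays in the domain of $\f_{f^nx}$), we pay a factor $K(x)$, and in the $N=1$ case estimating the H\"older constant of the derivative at $0$ costs additional powers of $K(x)$.

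First I would establish (5), the Lyapunov-norm orbit estimate, and (2) simultaneously by induction on $n$. The base case $n=0$ is trivial. For the inductive step, write $\f_{f^nx} = F_{f^nx} + (\text{higher order})$, where $F_{f^nx}$ is the derivative at $0$; by \eqref{estAnorm} we have $\|F_{f^nx}\|_{f^{n+1}x\leftarrow f^nx} \le e^{\chi_\ell + \e}$, and the nonlinear part is controlled by $\|\f_{f^nx}\|_{\Cr}$ together with the diameter of the ball, which by the inductive hypothesis (5) and our choice of $\rho$ is small enough that the nonlinear contribution is bounded by, say, $e^{\chi_\ell + \e}\cdot(e^\e - 1)$ in operator norm after switching between norms via \eqref{estMnorm} and \eqref{estPol}. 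Here the temperedness of $\|\f_x\|_{\Cr}$ via \eqref{estC} and the choice $\e < \e_0$ (which makes $\chi_\ell + 2\e < 0$, ensuring the orbit actually contracts and stays in the relevant balls) are exactly what make the geometric series close up. Once (5) is in hand, (4) follows from (5) by \eqref{estLnorm} (one application of $K(x)$), and (3) follows from (2) the same way; (2) itself comes out of the same induction as (5) since $D_t\f_x^n$ is a product of derivatives $D_{\f_x^{k}(t)}\f_{f^kx}$ each of which is close to $F_{f^kx}$ in Lyapunov operator norm by the $\Cr$ bound and the smallness of $\rho$.

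The main obstacle I expect is the bookkeeping in the $N=1$, $\a>0$ case: there one does not have a second derivative to bound the variation of $D_t\f_x^n$, so controlling $\|D_t\f_x^n\|$ uniformly over $t\in B_{x,\rho(x)}$ requires using the $\a$-H\"older constant of $D\f_x$ at $0$, and this H\"older constant only gives control like $\|D_t\f_x - D_0\f_x\| \le \|\f_x\|_{\Cr}\,\|t\|^\a$. Feeding this through $n$ compositions, the accumulated error is a sum over $k$ of terms involving $\|\f_{f^kx}\|_{\Cr}\cdot\|\f_x^k(t)\|^\a$, and $\|\f_x^k(t)\|^\a \le (K(x) e^{(\chi_\ell+2\e)k}\rho(x))^\a$ by (4); for this sum to be summable and small we need $\chi_\ell\a + (1+2\a)\e < 0$ with room to spare, and we need the $K(x)^\a$ factor absorbed into $\rho(x)$, which is precisely why $\rho(x)$ carries the weight $K(x)^{-3/\a}$ and hence why $\kappa = 1 + 3/\a$. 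I would carry out the induction carefully keeping track of these exponents, and note at the end that the $\a=0$, $N\ge2$ case is genuinely easier since the second derivative bound replaces the H\"older estimate and no negative power of $\a$ enters, giving $\kappa = 4$.
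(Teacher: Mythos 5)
Your overall strategy coincides with the paper's: shrink $\sigma$ by a tempered weight to define $\rho$, then run a simultaneous induction on $n$ in which the one-step estimate $\|D_{t'}\f_{f^nx}\|_{f^{n+1}x\leftarrow f^nx}\le e^{\chi_\ell+2\e}$ (with $t'=\f_x^n(t)$) is obtained by comparing $D_{t'}\f_{f^nx}$ with $F_{f^nx}$ using the $\b$-H\"older (resp.\ Lipschitz) constant of the derivative at $0$ and the inductive decay of $\|t'\|$; the paper proves (2),(3),(4) by induction and deduces (5) from (2), which is only an immaterial reordering of your plan, and your diagnosis of why the $N=1$ case forces $\kappa=1+3/\a$ is correct.

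The one concrete flaw is the candidate $\rho(x)=c\,K(x)^{-p}\sigma(x)$ with $c$ a genuine constant. The $\b$-H\"older constant at $0$ of $D\f_{f^nx}$ is bounded only by the measurable, unbounded function $C(f^nx)$ of \eqref{estC}, so the one-step perturbation satisfies $\|D_{t'}\f_{f^nx}-F_{f^nx}\|_{f^{n+1}x\leftarrow f^nx}\le K(f^{n+1}x)\,C(f^nx)\,\|t'\|^{\b}$, and making this uniformly small (of order $\e e^{\chi_\ell}$) forces $C(x)\,\rho(x)^{\b}$, not merely $\rho(x)^{\b}$, to be small. Temperedness of $C$ controls only its growth along the orbit, not its initial size at $x$, so a factor $C(x)^{-1/\b}$ must be built into $\rho$ itself; the paper takes $\rho(x)=\sigma(x)\bigl[\e\, e^{\chi_\ell}(C(x)K(x)^2)^{-1}\bigr]^{1/\b}$ in \eqref{rho}. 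Since $C$ is $\e$-tempered this contributes one additional $e^{-\e n/\b}$ to the decay of $\rho(f^nx)$, and correspondingly only $K(x)^{-2/\b}$ rather than $K(x)^{-3/\b}$ is needed, so the total exponent is still $\kappa=1+3/\b$, in agreement with \eqref{kappa}. With this correction your induction closes exactly as in the paper.
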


\begin{proof} 
We take $\b=1$ if $N\ge2$ and $\b = \a>0$ if $N=1$. For each $x\in \La$ we define 
\begin{equation}\label{rho}
\rho (x) = \sigma(x) [\e \, e^{\chi_\ell} (C(x)K(x)^2)^{-1}]^{1/\beta}.
\end{equation}
Then (1) follows
from \eqref{estC}, \eqref{est sig},  and \eqref{estK}; (5) follows from (2). We prove (2), (3), and  (4) by induction. The statements are clear for $n=0$, suppose they hold for $n$. Note that (2) implies (3) by \eqref{estMnorm}, and (3) implies (4). 
We observe  that 
$$
\| D_t\,\f^{n+1}_x \|_{f^{n+1}x \leftarrow x} \le  
\| D_{t'}\,\f_{f^{n}x} \|_{f^{n+1}x \leftarrow f^{n}x} \cdot 
\| D_t\,\f_{f^{n}x} \|_{f^nx \leftarrow x}, \quad\text{where }t'=\f_x^{n}(t).
$$
 Then (2) follows from the  inductive assumption and  
\begin{equation} \label{Dt'}
\| D_t\,\f_{f^{n}x} \|_{f^{n+1}x \leftarrow f^{n}x} \le e^{\chi_\ell+2\e}.
\end{equation} 
To prove \eqref{Dt'} we denote $\Delta=D_{t'}\,\f_{f^{n}x} -D_{0}\,\f_{f^{n}x}$.
By the choice of $\beta$, the $\beta$-H\"older constant of $D_{s}\,\f_{f^{n}x}$
at $0$ is at most $\,C(f^{n}x)$, so  using \eqref{estK}  we obtain
$$
\|\Delta \|_{f^{n+1}x \leftarrow f^{n}x} \le K(f^{n+1}x) \|\Delta \|\le K(f^{n+1}x)C(f^{n}x)\| t' \|^\beta \le 
$$
and using \eqref{estC} and the inductive assumption (4) we get
$$
\le K(x)\,C(x)\,e^{(2n+1)\e}  K(x) \,e^{\beta(\chi_\ell+2\e)n}\, \|t\|^\beta
 \le e^{\e} \,C(x)\,K(x)^2\, e^{[2\e+\beta(\chi_\ell+2\e)]n} \,\|t\|^\beta.
$$
 Since $\|t\| \le \rho (x)$ and $\beta\chi_\ell+2(1+\beta)\e\le 0$
we obtain 
$$\|\Delta \|_{f^{n+1}x \leftarrow f^{n}x} \le e^{\e}\, C(x)\,K(x)^2\, \rho(x)^\beta 
  \le  \e \, e^{\chi_\ell+\e} \sigma(x)^\beta \le \e \, e^{\chi_\ell+\e}.
$$\, Since 
$$
D_{0}\,\f_{f^{n}x}=F_{f^{n}x} \quad\text{and}\quad 
\| F_{f^{n}x} \|_{f^{n+1}x \leftarrow f^{n}x} \le e^{\chi_\ell+\e}
$$ 
by \eqref{estAnorm}, we conclude that
$$
\|D_{t'}\,\f_{f^{n}x}\|_{f^{n+1}x \leftarrow f^{n}x} \le 
\|\Delta \|_{f^{n+1}x \leftarrow f^{n}x}+ \|F_{f^{n}x}\|_{f^{n+1}x \leftarrow f^{n}x} \le \e \, e^{\chi_\ell+\e} +e^{\chi_\ell+\e} \le e^{\chi_\ell+2\e}.
$$ 
\end{proof}

\subsection{Construction of $\p$ and of the Taylor polynomial for  $\h$}$\;$

\noindent For each $x\in \La$ and map $\f_x : \E_{x} \to \E_{fx}$ we consider the 
Taylor polynomial at $t=0$: 
\begin{equation}\label{f_x}
\f_x(t) \sim \sum_{n=1}^N \fd^{(n)}_x(t).
\end{equation}
As a function of $t$, $\fd^{(n)}_x(t) :\E_{x} \to \E_{fx}$ is a  homogeneous polynomial map of degree $n$.
First we construct the Taylor polynomials at $t=0$ for the desired coordinate 
change $\h_x(t)$ and the polynomial extension $\p_x(t)$. We use  similar  notations for these Taylor polynomials:
$$\h_x(t)\sim \sum_{n=1}^N \hd^{(n)}_x(t)\quad \text{and} \quad 
 \p_x(t)= \sum_{n=1}^d \pd^{(n)}_x(t).
 $$
 For the first derivative we choose 
$$\hd^{(1)}_x= \Id : \E_x \to \E_x \quad \text{and} \quad 
\pd^{(1)} _x =\fd _x  \quad \text{for all } x \in \La.
$$

We construct the terms $\hd^{(n)}_x$ inductively to  ensure
that the terms $\pd^{(n)}_x$ determined by the conjugacy equation are 
of sub-resonance type. The base of the induction is the linear terms chosen above. For each $x \in \La$ we will construct $\hd^{(n)}_x$ and 
$\pd ^{(n)}_x$  which are measurable in $x$ and $n^2\e$-tempered, i.e. 
\begin{equation}\label{HPslow}
\sup_{k \in \N}\,  e^{-n^2 \e k} \, \| \hd^{(n)}_{f^kx}\|_{f^kx \leftarrow f^kx} < \infty 
\quad\text{and} \quad \sup_{k \in \N} \, e^{-n^2 \e k} \, \| \pd^{(n)}_{f^kx}\|_{f^kx \leftarrow f^kx} < \infty.
\end{equation}
Using these notations in the conjugacy equation 
$ \h_{fx}  \circ \f_{x} =\p_{x} \circ \h_{x} $ 
$$
\left( \Id   +\sum_{i=2}^N  H^{(i)} _{fx}  \right) \circ
\left( F_x+\sum_{i=2}^N F^{(i)} _{x} \right)  
\sim \left( F_{x} + \sum_{i=2}^d P^{(i)}_{x} \right) \circ
\left( \Id +\sum_{i=2}^N H^{(i)}_{x} \right).
$$
and considering the terms of degree $N\ge n \ge 2$,  we obtain  
$$
F^{(n)}_{x}\, + \, H^{(n)}_{fx} \circ F(x)\,+ \,
\sum H^{(i)}_{fx} \circ F^{(j)}_{x}
\,= \,F_{x} \circ H^{(n)}_{x}+ P^{(n)}_{x}+\,
\sum P^{(j)}_{x} \circ H^{(i)}_{x},
$$
where the summations are over all $i$ and $ j$ such that $i j=n$ and $ 1<i,j<n$.
We rewrite the equation as 
\begin{equation}\label{Pn}
F_{x}^{-1} \circ P^{(n)}_{x} =  - H^{(n)}_{x}  + F_{x}^{-1} \circ H^{(n)}_{fx} \circ F_{x} + Q_{x}, \;
\end{equation}
where
\begin{equation}\label{Q}
Q_{x}= F_{x}^{-1} \left( F^{(n)}_{x} + 
\sum_{ij=n, \;\, 1<i,j<n} H^{(i)}_{fx} \circ F^{(j)}_{x}  
-  P^{(j)}_{x} \circ H^{(i)}_{x} \right).
\end{equation}
We note that $Q_x$ is composed only of terms $H^{(i)}$ and  $P^{(i)}$ with $1<i<n$, which are already constructed, and terms $F^{(i)}$ with $1<i\le n$,
which are given. Thus by the inductive assumption $Q_x$ is defined for all
 $x \in \La$ and is measurable and tempered in $x$. 

Let $\po_x^{(n)}$ be  the space of all polynomial maps on  $\E_x$ of 
degree $n$, and  let $\s_x^{(n)}$ and $\n_x^{(n)}$ be the subspaces of 
sub-resonance and non sub-resonance  polynomials  respectively.
We seak $H^{(n)}_{x}$ so that the right side of \eqref{Pn}
 is in $\s_x^{(n)}$, and hence so is $P^{(n)}_{x}$ when defined by this equation.

Projecting  \eqref{Pn} to the factor bundle $\po^{(n)} / \s^{(n)}$, our goal is to solve the  equation
 \begin{equation}\label{barHn}
0 =  - \bar H^{(n)}_{x}  + F_{x}^{-1} \circ \bar H^{(n)}_{fx} \circ F_{x} + \bar Q_{x}, \;
\end{equation}
where $\bar H^{(n)}$ and $ \bar Q$ are the projections of $H^{(n)}$ and $Q$
respectively.

We consider the bundle automorphism $\Phi : \po^{(n)} \to \po^{(n)}$ covering 
$f^{-1}: \M \to \M$ given by
the maps $\Phi_x : \po^{(n)} _{fx} \to \po^{(n)} _{x}$
 \begin{equation}\label{Phi}
\Phi_x (R)= \fd _{x}^{-1} \circ R  \circ \fd _{x}.
 \end{equation}
Since $\fd$ preserves the splitting $\E=\E^1\oplus \dots \oplus \E^{\ell}$, it follows from the definition that the sub-bundles $\s^{(n)}$ and $\n^{(n)}$ are $\Phi $-invariant. 
We denote by $\bar \Phi$ the induced automorphism of $\po^{(n)} / \s^{(n)}$
and conclude that \eqref{barHn} is equivalent to 
 \begin{equation}\label{fixedP}
 \bar H^{(n)}_{x} =  \tilde \Phi_x ( \bar H^{(n)}_{fx} ), \quad \text{where }\; \tilde \Phi_x (R)= \bar \Phi_x (R) +  \bar Q_{x}.
\end{equation}
Thus a solution of \eqref{barHn} is a $\tilde \Phi$-invariant section of $\po^{(n)} / \s^{(n)}$. We will show that $\tilde \Phi$ is a nonuniform contraction 
and that it has a unique measurable tempered invariant section.
First, for polynomials of specific homogeneous type the exponent of $\Phi$ is 
determined  by the exponents of $F$ as follows.

\begin{lemma} \label{exponents}
For a polynomial $R: \E_{f x} \to \E_{f x}^i $ of homogeneous type  
$s=(s_1,  ..., s_{\ell})$ 
\begin{equation}
\| \Phi_x (R) \|_{x\leftarrow x} \le e^{ -\chi_i+\sum s_j \chi_j +(n+1)\e}\,\| R \|_{fx\leftarrow fx}.
\end{equation}
\end{lemma}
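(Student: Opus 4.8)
The plan is to estimate directly the Lyapunov norm of $\Phi_x(R) = \fd_x^{-1} \circ R \circ \fd_x$ for a polynomial $R : \E_{fx} \to \E_{fx}^i$ of homogeneous type $s = (s_1,\dots,s_\ell)$. First I would restrict attention to a unit vector $u \in \E_x$ with $\|u\|_{x,\e} = 1$, decompose it as $u = u_1 + \dots + u_\ell$ with $u_j \in \E_x^j$, and apply $\fd_x$. Since $\fd_x$ is linear and preserves the Lyapunov splitting, $\fd_x(u) = \fd_x(u_1) + \dots + \fd_x(u_\ell)$ with $\fd_x(u_j) \in \E_{fx}^j$; writing $\fd_x(u_j) = a_j v_j$ where $v_j \in \E_{fx}^j$ has unit Lyapunov norm and $a_j = \|\fd_x(u_j)\|_{fx,\e}$, the homogeneity relation \eqref{stype} gives $R(\fd_x(u)) = a_1^{s_1}\cdots a_\ell^{s_\ell}\, R(v_1 + \dots + v_\ell)$.

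Next I would bound each factor. From \eqref{estAEi} with $n=1$ applied to $\fd_x = F_x$ (note $u_j$ need not be a unit vector, but the estimate is homogeneous), we get $a_j = \|F_x(u_j)\|_{fx,\e} \le e^{\chi_j + \e}\,\|u_j\|_{x,\e} \le e^{\chi_j + \e}$ since $\|u_j\|_{x,\e} \le \|u\|_{x,\e} = 1$ (the Lyapunov subspaces being $\langle\cdot,\cdot\rangle_{x,\e}$-orthogonal). Hence $a_1^{s_1}\cdots a_\ell^{s_\ell} \le e^{\sum s_j \chi_j + (\sum s_j)\e} \le e^{\sum s_j \chi_j + n\e}$, using that $\sum s_j \le n$ for a homogeneous-type-$s$ term occurring in a degree-$n$ polynomial. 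Also $\|v_1 + \dots + v_\ell\|_{fx,\e} \le \sum_j \|v_j\|_{fx,\e} = \ell$; here I should be slightly more careful and instead bound $\|v_1 + \dots + v_\ell\|_{fx,\e}$ directly — by orthogonality of the Lyapunov subspaces it equals $(\sum_j \|v_j\|_{fx,\e}^2)^{1/2} = \sqrt{\ell}$, so $\|R(v_1+\dots+v_\ell)\|_{fx,\e} \le \|R\|_{fx\leftarrow fx}\cdot(\sqrt\ell)^n$; absorbing the mild factor $\ell^{n/2}$ into $e^{\e}$ after possibly shrinking the admissible $\e$ (or, cleaner, noting one may rescale so that only the relevant $v_j$'s with $s_j>0$ contribute and the count is bounded). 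Finally $\|\fd_x^{-1}(w)\|_{x,\e} \le e^{-\chi_i + \e}\,\|w\|_{fx,\e}$ for $w \in \E_{fx}^i$, again from \eqref{estAEi} applied to $F_x^{-1} = F_{fx}^{-1}$ with $n = -1$. Multiplying these three estimates yields $\|\Phi_x(R)(u)\|_{x,\e} \le e^{-\chi_i + \sum s_j\chi_j + (n+1)\e}\,\|R\|_{fx\leftarrow fx}$, and taking the supremum over unit $u$ gives the claim.

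The main obstacle — really the only delicate point — is controlling the polynomial-norm factor $\|R(v_1 + \dots + v_\ell)\|_{fx,\e}$ by $\|R\|_{fx\leftarrow fx}$ without losing more than $e^{n\e}$ in the process, since the defining norm \eqref{normDef} is a supremum over \emph{unit} vectors whereas $v_1 + \dots + v_\ell$ has Lyapunov norm $\sqrt\ell > 1$. One handles this by homogeneity of $R$ (degree $n$) to pull out the scalar $\|v_1+\dots+v_\ell\|^n \le \ell^{n/2}$ and then observing $\ell^{1/2} \le e^{\e}$ is forced once $\e$ is small relative to the fixed data (alternatively, absorb this constant into the definition of the tempered functions, which is harmless). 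Everything else is a mechanical application of \eqref{estAEi} together with the orthogonality of the Lyapunov decomposition, so the proof is short.
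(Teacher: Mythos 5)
Your overall strategy is the same as the paper's — decompose a Lyapunov-unit vector $u=u_1+\dots+u_\ell$, push it through $F_x$, use the homogeneity relation \eqref{stype} to pull out $a_1^{s_1}\cdots a_\ell^{s_\ell}$, and finish with \eqref{estAEi} for $F_x^{-1}$ on $\E^i$. But there is a genuine gap in the normalization step, and it is exactly the point you flag as "the only delicate point." By writing $F_x(u_j)=a_jv_j$ with $v_j$ of \emph{unit} Lyapunov norm, you are forced to evaluate $R$ at $v_1+\dots+v_\ell$, which has Lyapunov norm $\sqrt{\ell}$, and homogeneity then produces the factor $\ell^{n/2}$. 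Neither of your proposed fixes works: the inequality $\ell^{1/2}\le e^{\e}$ is forced by taking $\e$ \emph{large}, not small (for $\ell\ge 2$ it fails for all sufficiently small $\e$, and $\e$ must be small elsewhere in the proof); and the constant cannot be absorbed into tempered functions, because the lemma feeds directly into Lemma \ref{contraction} and the fixed-point estimate $\|\tilde\Phi^x(R)-\tilde\Phi^x(R')\|\le e^{\la+(n^2+n+1)\e}\|R-R'\|<1$, where an extra fixed factor $\ell^{n/2}>1$ must be beaten by $e^{\la}$ with $\la$ determined by the exponents — there is no room to shrink anything to compensate.

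The repair is a different choice of $a_j$: take $a_j=\|F|_{\E^j_x}\|_{fx\leftarrow x}$, the \emph{operator} norm of $F_x$ restricted to $\E^j_x$, and write $F_x(u_j)=a_jv_j'$. Then $\|v_j'\|_{fx}=\|F_x(u_j)\|_{fx}/a_j\le\|u_j\|_x$ rather than $=1$, so by orthogonality of the Lyapunov splitting $\|v_1'+\dots+v_\ell'\|_{fx}^2\le\sum_j\|u_j\|_x^2=\|u\|_x^2=1$, and $\|R(v_1'+\dots+v_\ell')\|_{fx}\le\|R\|_{fx\leftarrow fx}$ with no constant at all, by \eqref{normDef}. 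The bound $a_j\le e^{\chi_j+\e}$ still holds by \eqref{estAEi}, and the rest of your argument goes through verbatim, including the final factor $e^{-\chi_i+\e}$ from $F|_{\E^i}^{-1}$. This is precisely what the paper does.
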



\begin{proof}
Suppose that $v=v_1+ \dots +v_{\ell}$, where $v_j \in \E^j_x$, and $\|v\|_x=1$. 
We denote $a_j=\| F|_{\E^j_x}\|_{fx\leftarrow x}$ and observe that
$F_x(v_j)= a_j v_j'  \in \E^j_{fx}$ with $\|v_j'\|_{fx}\le \|v_j\|_{x}$.
Since $R$ has homogeneous type $s= (s_1, \dots , s_\ell)$ we obtain 
by \eqref{stype} that
\begin{equation}
(R \circ F_x)(v)=R(a_1 v_1'+ \dots + a_\ell v_\ell ')= a_1^{s_1} \cdots  a_\ell^{s_\ell} \,\cdot R(v_1'+ \dots + v_\ell ').
\end{equation}
where $v'=v_1'+ \dots + v_\ell '$ has $\|v'\|_{fx}\le \|v \|_x=1$
by orthogonality of the splitting in the Lyapunov metric.
Thus 
$$
\| (R \circ F_x)(v)\|_{fx}= a_1^{s_1} \cdots  a_\ell^{s_\ell} \cdot
\| R(v')\|_{fx} \le a_1^{s_1} \cdots  a_\ell^{s_\ell} \,\cdot
\| R\|_{fx\leftarrow fx}
$$
for any $v \in \E_x$ with $\|v\|_x=1$, so we get 
$\| R \circ F_x\|_{x\leftarrow fx}\le a_1^{s_1} \cdots  a_\ell^{s_\ell} \,
\| R\|_{fx\leftarrow fx} $ by definition  \eqref{normDef}.
Now \eqref{normP} yields
$$
\| \Phi_x (R) \|_{x\leftarrow x} =\|F|_{\E^i_x}^{-1} \circ R \circ F_x\|_{x\leftarrow x} \le \|F|_{\E^i_x}^{-1}\|_{x\leftarrow fx} \cdot 
\| R \circ F_x\|_{x\leftarrow fx}\le
$$
$$
\le \|F|_{\E^i_x}^{-1}\|_{x\leftarrow fx} \cdot a_1^{s_1} \cdots  a_\ell^{s_\ell} \cdot
\| R\|_{fx\leftarrow fx} \le e^{-\chi_i +\e}  \cdot \prod_j (e^{\chi_j +\e})^{s_j}  \cdot  \| R \|_{fx\leftarrow fx} .
$$ 
Since $a_j=\| F|_{\E^j_x}\|_{fx\leftarrow x}\le e^{\chi_j +\e}$ and 
$\|F|_{\E^i_x}^{-1}\|_{x\leftarrow fx} \le e^{-\chi_i +\e}$ by \eqref{estAEi}.
\end{proof}
{\bf Remark.} Similarly, one can show that $\| \Phi^{-1} (R) \| \le   e^{\chi_i - \sum s_j \chi_j + (n+1)\e}$. Since this holds for any $\e>0$, one can obtain that
 the Lyapunov exponent of $\,\Phi$ on $R$ is  
$$\lim_{n \to \pm \infty} n^{-1} \log \| \Phi^n (R)\|=-\chi_i + \sum s_j \chi_j.$$

Taking the supremum of  $-\chi_i + \sum s_j \chi_j$ over all {\em non}
sub-resonance types $(i; s_1,  ..., s_{\ell})$, 
that is those for which this value  is negative, we define 
\begin{equation}\label{lambda}
\la =\sup \, \{ -\chi_i + \sum s_j \chi_j \}<0.
\end {equation} 
We note that  $\la <0$ since there only finitely many such values which are
greater than a given number. Thus we obtain the following lemma.

\begin{lemma} \label{contraction}
The map $\Phi : \n^{(n)} \to \n^{(n)}$ given by \eqref{Phi} is a nonuniform contraction over $f^{-1}$, and hence so is $\,\tilde \Phi : \po^{(n)} / \s^{(n)} \to \po^{(n)} / \s^{(n)}$  given by \eqref{fixedP}. More precisely,  
$\| \Phi_x (R) \|_{x} \le e^{\la +(n+1)\e}\| R \| _{fx}$.
\end{lemma}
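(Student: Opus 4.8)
The plan is to reduce the lemma to Lemma~\ref{exponents} by decomposing $\po^{(n)}$ into the $\Phi$-invariant pieces on which that estimate is sharp. For $1\le i\le\ell$ and a homogeneous type $s=(s_1,\dots,s_\ell)$ with $\sum_j s_j=n$, let $\po^{(n),i,s}_x$ be the space of homogeneous polynomial maps $\E_x\to\E^i_x$ of type $s$ in the sense of \eqref{stype}. Projecting the target of a map in $\po^{(n)}_x$ onto the Lyapunov subspaces $\E^1_x,\dots,\E^\ell_x$ and then splitting each component into its homogeneous-type pieces yields a direct sum decomposition $\po^{(n)}_x=\bigoplus_{i,s}\po^{(n),i,s}_x$, in which $\s^{(n)}_x$ is the sum of the pieces with $\chi_i\le\sum_j s_j\chi_j$ and $\n^{(n)}_x$ is the sum of the remaining, non sub-resonance, pieces. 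In particular $\po^{(n)}=\s^{(n)}\oplus\n^{(n)}$, so $\n^{(n)}$ is a $\Phi$-invariant complement of $\s^{(n)}$ and $\n^{(n)}\cong\po^{(n)}/\s^{(n)}$ $\Phi$-equivariantly.

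First I would record that $\Phi$ acts blockwise. Since $\fd_x=F_x$ is linear and preserves the splitting $\E=\E^1\oplus\dots\oplus\E^\ell$, for $R\in\po^{(n),i,s}_{fx}$ the map $R\circ F_x$ still has homogeneous type $s$: evaluating on $\sum_j a_j v_j$ with $v_j\in\E^j_x$ and using $F_xv_j\in\E^j_{fx}$ pulls out the factor $a_1^{s_1}\cdots a_\ell^{s_\ell}$; composing on the left with $F_x^{-1}|_{\E^i}$ keeps the target in $\E^i_x$ and does not affect this scaling, so $\Phi_x(R)=F_x^{-1}\circ R\circ F_x\in\po^{(n),i,s}_x$. (This is the $\Phi$-invariance already noted after \eqref{Phi}; the point here is the blockwise action.) Now on a block with $(i,s)$ non sub-resonance, Lemma~\ref{exponents} gives $\|\Phi_x(R)\|_{x\leftarrow x}\le e^{-\chi_i+\sum_j s_j\chi_j+(n+1)\e}\|R\|_{fx\leftarrow fx}$, and since the type is non sub-resonance we have $-\chi_i+\sum_j s_j\chi_j<0$, hence $\le\la$ by the definition \eqref{lambda}. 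Thus $\|\Phi_x(R)\|_x\le e^{\la+(n+1)\e}\|R\|_{fx}$ for every $R$ lying in a single block of $\n^{(n)}$.

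To pass to an arbitrary $R\in\n^{(n)}_x$ I would use that the projections of $\po^{(n)}_x$ onto the blocks $\po^{(n),i,s}_x$ are bounded, uniformly in $x$, by a constant depending only on $m$ and $n$: the projection onto a fixed target $\E^i_x$ has norm $1$ by orthogonality of the Lyapunov splitting, and the type-$s$ component of a homogeneous polynomial is a universal polarization/Cauchy combination of values $R(z_1u_1+\dots+z_\ell u_\ell)$ with $u_j\in\E^j_x$ and $|z_j|\le1$, whose norms are again controlled by $\|R\|_{x\leftarrow x}$ through orthogonality of the splitting. Summing the per-block estimates then gives $\|\Phi_x(R)\|_x\le c(m,n)\,e^{\la+(n+1)\e}\|R\|_{fx}$; equivalently, if one equips $\n^{(n)}$ with the norm $\max_{i,s}\|R_{i,s}\|_{x\leftarrow x}$ the estimate holds verbatim with no constant. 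Since $\la<0$ and $(n+1)\e\le(N+1)\e<-\la$ by the choice of $\e_0$ in \eqref{epsilon}, iterating the per-step bound along an orbit gives $\|\Phi^k_x(R)\|_x\le e^{k(\la+(n+1)\e)}\|R\|_{f^kx}$, and comparing with the standard norm via \eqref{estLnorm}--\eqref{estK} (and \eqref{estPol}) exhibits $\Phi|_{\n^{(n)}}$ as a nonuniform contraction over $f^{-1}$. Finally $\tilde\Phi_x(R)=\bar\Phi_x(R)+\bar Q_x$ is affine with linear part $\bar\Phi_x$, which under the isomorphism $\po^{(n)}/\s^{(n)}\cong\n^{(n)}$ is conjugate to $\Phi|_{\n^{(n)}}$; hence $\tilde\Phi$ is a nonuniform contraction as well — the property that will let \eqref{fixedP} have a unique tempered invariant section.

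The main point requiring care is organizational rather than dynamical: setting up the homogeneous-type decomposition cleanly, verifying the $x$-uniform bound on the block projections, and keeping every estimate phrased in the Lyapunov norm at the correct base point so the bounds chain along orbits without introducing non-tempered factors. All the genuinely dynamical content is already contained in Lemma~\ref{exponents} together with the definition \eqref{lambda} of $\la$, so no estimate beyond assembling the blocks is needed.
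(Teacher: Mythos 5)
Your proposal is correct and follows essentially the same route as the paper: the paper obtains the lemma directly from Lemma \ref{exponents} together with the definition \eqref{lambda} of $\la$, its written proof consisting only of the identification of $\po^{(n)}/\s^{(n)}$ with the $\Phi$-invariant complement $\n^{(n)}$. Your additional care about the constant arising when reassembling the homogeneous-type blocks (or, equivalently, your use of the block-max norm) addresses a point the paper glosses over, and as you note it is harmless for the subsequent contraction/fixed-point argument.
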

\begin{proof}
The statement about $\,\tilde \Phi$ follows since the linear part 
$\bar \Phi$ of $\tilde \Phi$ is given by $\Phi$ when $\po^{(n)} / \s^{(n)}$ is naturally identified with $\n^{(n)}$.
\end{proof}

It follows from the previous remark that $\la$ is the maximal Lyapunov exponent   of $\Phi$ over $f^{-1}$ on the space of non sub-resonant polynomials, and that all Lyapunov exponents of $\Phi |_{\s^{(n)}}$  
are non-negative.

\vskip.1cm

Now we construct a $\tilde \Phi$-invariant  measurable section
of the bundle $\bb=\po^{(n)} / \s^{(n)}$ and study its properties. 
The construction is orbit-wise. We fix a point $x \in \La$, consider 
its positive orbit $\{ x_k=f^k x: \, k\ge 0\}$, and define the Banach space 
$$
\bx = \{ R=(R_k)_{k=0}^\infty: \,R_k\in \bb _{x_k}, \,
\|R\| < \infty \}, \; \text{ where }\,  \|R\|=\sup_{k\ge0}  e^{-\e n^2k} \| R_k \|_{x_k\leftarrow x_k} 
$$
and $\| R_k \|_{x_k\leftarrow x_k}$ is the norm induced on $\bb _{x_k}$ by the Lyapunov norm $\| . \|_{x_k}$ on $\E_x$. We denote $\qx = ( \bar Q_{x_k})_{k=0}^\infty$ and claim that it is in $\bx$. For this we need to estimate how 
the Lyapunov norm of \eqref{Q} grows along the trajectory: 
\begin{equation}\label{Q growth}
\begin{aligned}
& \|Q_{x_k}\|_{x_k\leftarrow x_k} \le  
\|F_{x_k}^{-1}\|_{x_{k}\leftarrow x_{k+1}} \cdot
( \,\|F^{(n)}_{x_k}\|_{x_{k+1}\leftarrow x_{k}} + \\
& \sum_{ij=n, \;\, 2\le i,j\le n/2} \|H^{(i)}_{x_{k+1}}\|_{x_{k+1}\leftarrow x_{k+1}}
 \|F^{(j)}_{x_k}  \|^i_{x_{k+1}\leftarrow x_{k}}
+  \|P^{(j)}_{x_k}\|_{x_{k+1}\leftarrow x_{k}}
 \|H^{(i)}_{x_k}\|^j_{x_{k}\leftarrow x_{k}}\, ).
 \end{aligned}
\end{equation}
First $\|F_{x_k}^{-1}\|_{x_{k}\leftarrow x_{k+1}}\le e^{-\chi_\ell+\e}$ for all $x$ and $k$ by \eqref{estAnorm}. The exponential growth rate in $k$ of $\|F^{(n)}_{x_k}\|_{x_{k+1}\leftarrow x_{k}}$ is at most $2\e$. Indeed, using \eqref{estPol} and 
\eqref{estK} we can obtain from \eqref{estC} the corresponding estimate 
for $\Cr$ norm with respect to the Lyapunov metric on $\E_{x_k}$: 
\begin{equation} \label{estCL}
\|\f_{x_k} \|_{\Cr,x_k} \le K(f{x_k}) \|\f_{x_k} \|_{\Cr} \le K({x_{k+1}})C({x_k})  \le e^{(2k+1)\e} K(x)C(x).
\end{equation} 
Then  using the inductive assumption 
\eqref{HPslow} we can estimate the exponential growth rate of the two terms
in the sum respectively as $(i^2+2i)\e$ and $(j^2+i^2j)\e$, which are at most
$((n/2)^2+in) \e < n^2\e$. So the exponential growth rate of 
$\|Q_{x_k}\|_{x_k\leftarrow x_k}$ can be estimated by $n^2\e$
and thus $\|\qx\|<\infty$.

Then $\tilde \Phi^x$ induces an operator
on $\bx$ by $
(\tilde \Phi^x (R)) _k =  \bar \Phi_{x_k} (R_{k+1})+ \qx _k
$ and we have
$$
\begin{aligned}
& \|\tilde \Phi^x (R) -\tilde \Phi^x (R')\|  =  \,
\sup_{k\ge0}  \, e^{-\e n^2k} \,\| \, \bar \Phi_{x_k} (R_{k+1}-R'_{k+1}) \,\|_{x_k\leftarrow x_k} \,\le \\
&\le \, \sup_{k\ge0} \, e^{-\e n^2k}  e^{\la +(n+1)\e} \, \| R_{k+1}-R'_{k+1} \|_{x_{k+1}\leftarrow x_{k+1}} \le \\
& \le   e^{\la +(n^2+n+1)\e } \sup_{k\ge0} \, e^{-\e n^2 (k+1)} \,\| (R_{k+1}-R'_{k+1}) \|_{x_{k+1}\leftarrow x_{k+1}}
\le e^{\la +(n^2+n+1)\e}\, \| R-R' \|.
\end{aligned}
$$
Since $\la +(n^2+n+1)\e<0$ by the choice of $\e$ \eqref{epsilon},   
$\tilde \Phi^x$ is a contraction
and thus has a unique fixed point $R^x\in \bx$. We claim that 
$\bar H^{(n)} _x = R^x_0$ is a measurable function which is a 
unique solution of \eqref{fixedP} or equivalently \eqref{barHn}. 
Measurability follows from the fact that the fixed point can be 
explicitly written as a series
\begin{equation}\label{bar s"}
\bar H^{(n)} _x  = \sum _{k=0}^\infty (F^k_{x})^{-1} \circ \bar Q_{x_k} \circ  F^k_{x}.
\end{equation}
Invariance is clear since $(R^x_{k+1})_{k=0}^\infty$ is a fixed point of
$\tilde \Phi^{fx}$ which coincides with  $(R^{fx}_{k})_{k=0}^\infty$ by 
uniqueness and thus $R^x_1=R^{fx}_0$. More generally, 
$\bar H^{(n)}_{x_k} =R^{x_k}_0 =R^x_k$, and since $R^x\in \bx$, the 
exponential growth rate of $\|\bar H^{(n)}_{x_k}\|_{x_k\leftarrow x_k}$ 
is at most $n^2\e$. Now we can choose $H^{(n)}_{x}$ as a lift  of $\bar H^{(n)} _x$
to $\po^{(n)}_x$ which is measurable in $x$ and satisfies \eqref{HPslow}.
Then we define $\pd^{(n)}_x$ by equation \eqref{Pn}. It also 
satisfies \eqref{HPslow} as so do $H$ and $Q$ and as 
$\|F_{x}\|_{x\leftarrow fx}$ and $\|F_{x}^{-1}\|_{fx\leftarrow x}$ are uniformly
bounded. This completes the inductive step.
\vskip.2cm
Thus we have constructed the Taylor polynomial for the coordinate change
\begin{equation}\label{H^N}
\h_x^N(t)= \sum_{n=1}^N \hd^{(n)}_x(t)\quad\text{of  order }\;
N\ge d=\lfloor \chi_1/\chi_{\ell} \rfloor
\end{equation}
 and the polynomial map 
$\p_x(t)= \sum_{n=1}^d \pd^{(n)}_x(t)$.

\vskip.4cm

\subsection{Construction of the coordinate change $\h$.} $\;$ 
\vskip.1cm
\noindent  We rewrite the conjugacy equation 
$\h_{fx} \circ \f_x=\p_x\circ \h_x$ in the form 
\begin{equation} \label{conj eq}
\h_x=\p_x^{-1} \circ \h_{fx} \circ \f_x.
\end{equation}
A solution $\h=\{ \h_x \}$ of this equation is a fixed point of the operator $T$
\begin{equation} \label{T}
T(\h)_x=\p_x^{-1} \circ \h_{fx} \circ \f_x.
\end{equation}
We denote $R=\h-\h^N$ and observe that $T(\h)=\h$ if and only if
\begin{equation} \label{Delta}
T(R)=R+\Delta^N, \;\,\text{ where}\quad\Delta^N = \h^N-T(\h^N).
\end{equation}
We will find $R$ as the fixed point of the 
 contraction  
\begin{equation} \label{tilde T} 
\tilde T(R)=T(R)+\Delta^N
\end{equation}
in an appropriate space of sequences of functions along an orbit.
By the construction of $\h^N$ and $\p$, $\,\h^N$ and $T(\h^N)$ have 
the same derivatives at the zero section up to order $N$. Hence 
$\Delta^N$ has vanishing  derivatives  at the zero section up to order $N$. 
To define the space we introduce some notations. For any $x \in \La$ we
denote by $B_{x,r}$ the ball  centered at $0$ in  $\E_x$ of 
radius $r<\rho(x)<1$ in the Lyapunov norm $\|.\|_x$. We define  
  $$
  \c_{x,r} = \{ R \in \Cr (B_{x,r},\E_x) : \;
  D^{(k)}_0 R =0, \; k=0,...,N \}.
  $$
In this proof we will consider the $\Cr$ norms {\em with respect to the 
Lyapunov metric} on $\E_{x}$. They are estimated through the norms for 
the standard metric \eqref{Crnorm} in \eqref{estCL}. In particular, we use
 the $\a$-H\"older constant \eqref{Canorm} of $D^{(N)} R$ at $0$ with 
 respect to the Lyapunov metric, which for any $R\in \c_{x,r}$ is given by
\begin{equation}\label{norm a}
 \|D^{(N)} R\|_{x,\a}= \sup \{ \| D^{(N)}_t R\|_{x\leftarrow x}\cdot \|t\|_x^{-\a} : \; 0\ne t \in B_{x,r}\}.
\end{equation} 
Also, for any $R\in \c_{x,r}$ we can estimate lower derivatives as 
\begin{equation}\label{deriv'}
 \|D^{(m)}_t R \|_{x\leftarrow x} \le \|t\|_x^{N-m} \, 
 \sup \,\{ \|D^{(N)}_s R \|_{x\leftarrow x} : \,\| s\|_x \le \|t\|_x \} ,
  \end{equation} 
so using the above H\"older constant we have that for any  $0 \le m<N$ 
and $t\in B_{x,r}$
\begin{equation}\label{deriv}
 \|D^{(m)}_t R \|_{x\leftarrow x}  \le \|t\|_x^{1+\a}\, \|D^{(N)} R\|_{x,\a}.
\end{equation} 
Thus the norms of all derivatives are dominated by the H\"older constant and hence
\begin{equation}\label{Cr=a}
  \|R \|_{\Cr (B_{x,r})} = \|D^{(N)} R \|_{x,\a}.
\end{equation} 
So we will take $\|D^{(N)} R \|_{x,\a}$ as the norm $\c_{x,r}$, which makes 
it into a Banach space.

Now we consider the corresponding spaces along the orbit $x_k=f^kx$.
We will specify later a large $L>1$ and a small $r=r(x)<\rho(x)$. Using
them we define $r_k=re^{-2Lk\e}$ and consider the  Banach space 
$$
\c^x = \{ \bar R=(R_k)_{k=0}^\infty: R_k\in \c_{x_k,r_k}, \,
\|\bar R\|_{\c^x} < \infty \}, \; \text{ where }  \|\bar R\|_{\c^x}=\sup_{k\ge0}  e^{-Lk\e} \|D^{(N)} R_k \|_{x_k,\a} 
$$
and the norm $\| . \|_{x_k,\a}$ is defined as in \eqref{norm a} and satisfies 
\eqref{Cr=a}. To ensure that $\bar \Delta^N = (\Delta^N_{x_k})$ is in $\c^x$
we need to estimate the growth of $\Cr$ norms 
of $\h^N_x$ and $T(\h^N)_x=\p_x^{-1} \circ \h^N_{fx} \circ \f_x$ along the orbit.

We recall that by the construction $D_{0}^{(1)} \, (\h_{x_k})= \Id$, and 
$D_{0}^{(1)} \, (\p_{x_k})= P^{(1)}_{x_k}=F_{x_k}$, which satisfy
$\| F_{x_k} \|_{x_{k+1}\leftarrow x_k} \le e^{\chi_\ell+\e}$ and
$\| F_{x_k}^{-1} \|_{x_k\leftarrow x_{k+1}} \le e^{\chi_1+\e}$. Also, 
for $2 \le n \le d$, Lyapunov norms of $D_{0}^{(n)} \, (\p_{x_k})= P^{(n)}_{x_k}$
and $D_{0}^{(n)} \, (\h_{x_k})= H^{(n)}_{x_k}$ grow at most at the 
exponential rate $n^2\e$ in $k$ by \eqref{HPslow}.

For $\h^N$, the derivative of order $N$ is constant $\hd^{(N)}_{x}$ on 
$\E_{x}$, and the lower derivatives on $B_{x,\rho(x)}$ can be 
inductively estimated by integration similarly to \eqref {deriv'}  
$$
 \|D^{(N-1)}_t \h_x \|_{x\leftarrow x}  \le 
 \|D^{(N-1)}_0 \h_x \|_{x\leftarrow x} + \|t\|_x \|\hd^{(N)}_{x}\|_{x\leftarrow x} 
 \le \|\hd^{(N-1)}_{x}\|_{x\leftarrow x} + \|\hd^{(N)}_{x}\|_{x\leftarrow x} 
$$
yielding the same estimate of the exponential rate as for $\hd^{(N)}_{x}$
\begin{equation}\label{H est}
 \|  \h_{x_k} \|_{\Cr(B\,{x_k,\rho(x_k)})} \le c_1(x) e^{N^2k\e}.
 \end{equation}

Since $\p _{x_k} ^{-1}$ is also a sub-resonance polynomial, its coefficients
can be obtained inductively from those of $\p _{x_k}$ and hence
there exists a constant $M=M(d)>d$  depending on $d$ only so that they grow 
at most at the exponential rate $M\e$ in $k$. The derivative of order $d$
is constant on $\E_{x_k}$, higher derivatives are zero, and the lower 
derivatives can be estimated  as for $\h$, so we obtain 
 for all $k \ge 0$
\begin{equation}\label{Pn est}
 \| (\p_{x_k})^{-1} \|_{\Cr(B\,{x_k,\rho(x_k)})} \le c_2(x) e^{Mk\e}.
 \end{equation}

To obtain estimates for $(T(\h^N))_x=\p_x^{-1} \circ \h^N_{fx} \circ \f_x$ we
use the following lemma.

\begin{lemma} \label{QF}
If $Q$ is a polynomial of degree at most $N$ and $\f $ is $\Cr$ 
then $Q \circ \f$ is $\Cr$ and 
$\, \|Q \circ \f \|_{\Cr} \le c_N\, \|Q\|_{C^N} \,  \| \f \|_{\Cr}^N$ where $c_N$
depends on $N$ only.
\end{lemma}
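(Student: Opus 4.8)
The plan is to reduce the composition estimate to a bookkeeping exercise on the Fa\`a di Bruno formula. First I would fix a multi-index/order and write, for $1\le m\le N$, the $m$-th derivative of $Q\circ\f$ at a point $t$ by the chain rule:
$$
D^{(m)}_t(Q\circ\f)=\sum_{\pi}\; D^{(|\pi|)}_{\f(t)}Q\;\bigl(D^{(|B_1|)}_t\f,\dots,D^{(|B_{|\pi|}|)}_t\f\bigr),
$$
where the sum runs over all partitions $\pi=\{B_1,\dots,B_{|\pi|}\}$ of $\{1,\dots,m\}$ into nonempty blocks, and $|\pi|$ is the number of blocks. Since $Q$ has degree at most $N$, the factor $D^{(|\pi|)}_{\f(t)}Q$ vanishes unless $|\pi|\le N$, and for the surviving terms it is bounded by $\|Q\|_{C^N}$; each factor $D^{(|B_i|)}_t\f$ is bounded by $\|\f\|_{\Cr}$; and the number of partitions of an $m$-element set is a combinatorial constant depending on $m\le N$ only. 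Multiplying, $|\pi|\le N$ factors of $\|\f\|_{\Cr}$ appear in each term, but to get a uniform bound one simply notes $\|\f\|_{\Cr}\ge\|D^{(1)}\f\|_0$ and — after possibly enlarging $c_N$ or invoking $\|\f\|_{\Cr}$ being comparable to a quantity $\ge 1$ in the application — pads each term out to exactly $N$ factors; this gives $\|D^{(m)}_t(Q\circ\f)\|\le c_N\|Q\|_{C^N}\|\f\|_{\Cr}^N$ for all $t$ in the domain and all $m\le N$, hence the bound on $\|D^{(k)}(Q\circ\f)\|_0$ for $k=0,\dots,N$.

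The only genuinely delicate point is the $\alpha$-H\"older constant of $D^{(N)}(Q\circ\f)$ at $0$, i.e.\ the last entry in the norm \eqref{Crnorm}. Here I would subtract the Fa\`a di Bruno expansions at $t$ and at $0$ term by term. Each term is a product of one factor $D^{(|\pi|)}_{(\cdot)}Q$ evaluated at $\f(t)$ resp.\ $\f(0)$ and several factors $D^{(|B_i|)}_{(\cdot)}\f$ evaluated at $t$ resp.\ $0$. Using the standard telescoping identity $\prod a_i-\prod b_i=\sum_i(a_i-b_i)\prod_{j<i}a_j\prod_{j>i}b_j$, I bound the difference by a sum in which exactly one factor is a difference. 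If that distinguished factor is some $D^{(|B_i|)}\f$, its difference is controlled by $\|D^{(N)}\f\|_\alpha\|t\|^\alpha$ via \eqref{Canorm} when $|B_i|=N$ (forcing $m=N$, $|\pi|=1$), and when $|B_i|<N$ by the mean value theorem applied to $D^{(|B_i|)}\f$ using the bound on $D^{(|B_i|+1)}\f\subseteq\|\f\|_{\Cr}$, which gives a Lipschitz — hence $\alpha$-H\"older on a ball of radius $\le 1$ — estimate. If the distinguished factor is $D^{(|\pi|)}Q$, then $\|D^{(|\pi|)}_{\f(t)}Q-D^{(|\pi|)}_{\f(0)}Q\|\le\|Q\|_{C^N}\|\f(t)-\f(0)\|\le\|Q\|_{C^N}\|D^{(1)}\f\|_0\|t\|$, again $\alpha$-H\"older on the unit ball. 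In every case the remaining (non-distinguished) factors are bounded by $\|Q\|_{C^N}$ and $\|\f\|_{\Cr}$, and counting factors as before absorbs everything into $c_N\|Q\|_{C^N}\|\f\|_{\Cr}^N\|t\|^\alpha$.

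The main obstacle, then, is purely the H\"older-constant-at-$0$ part: one must be careful that the only place where $\|D^{(N)}\f\|_\alpha$ (rather than a full $C^{N+1}$ bound, which is not assumed) enters is the single term $m=N$, $|\pi|=1$, distinguished factor $=D^{(N)}\f$, and that all other differences genuinely produce a Lipschitz (hence H\"older, since the domain ball has radius $<1$) bound from derivatives of order $\le N$ that are already included in $\|\f\|_{\Cr}$ and $\|Q\|_{C^N}$. Everything else is routine: collect terms, bound the combinatorial count of partitions by a constant $c_N$ depending on $N$ alone, and read off \eqref{Crnorm}. I would remark that the same argument, run with the Lyapunov norms in place of the standard ones, gives the analogous statement used later via \eqref{estCL} and \eqref{estPol}.
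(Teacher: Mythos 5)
Your proposal is correct and follows essentially the same route as the paper: expand $D^{(N)}(Q\circ\f)$ by Faà di Bruno, isolate the single term $D_{\f(t)}Q\circ D^{(N)}_t\f$ as the only place where the $\alpha$-Hölder constant of $D^{(N)}\f$ at $0$ is needed, and handle all remaining terms (and the lower-order derivatives) by Lipschitz bounds coming from derivatives of order $\le N$ of $Q$ and $\f$, using that the domain ball has radius $\le1$ to convert Lipschitz into $\alpha$-Hölder. Your version is merely more explicit about the partition combinatorics and the padding of $\|\f\|_{\Cr}$-factors to the exponent $N$, which the paper glosses over.
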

\begin{proof}
Since $Q$ is $C^\infty$ it is clear that $Q \circ \f$ is $C^N$. 
For the $N^{th}$ derivative we have
$$
   D^{(N)}_t \,(Q \circ \f)=  D_{\f(t)} Q \circ D^{(N)}_t \f +
   \sum_{kj=N, \; j<N} D^{(k)}_{\f(t)} Q \circ D^{(j)}_t \f .
 $$
First we estimate $\a$-H\"older constant at $0$ of the first term. 
As $DQ$ is linear, we get
$$
   D_{\f(t)} Q \circ D^{(N)}_t \f - D_{0} Q \circ D^{(N)}_0 \f =
   (D_{\f(t)} Q- D_{0} Q ) \circ D^{(N)}_t \f  +  
   D_{0} Q \circ ( D^{(N)}_t \f - D^{(N)}_0 \f )
 $$
whose norm can be estimated by
$$
   \| D_{\f(t)} Q - D_{0} Q \| \cdot \|D^{(N)}_t \f \| +  
   \|D_{0} Q \| \cdot \| D^{(N)}_t \f - D^{(N)}_0 \f \| \le
 $$
 $$
 \|Q\|_{C^2} \cdot  \|\f(t) \| \cdot  \| \f \|_{\Cr} + \, 
   \|Q\|_{C^1} \cdot \| \f \|_{\Cr}\cdot  \|t \|^\a \le 
    $$
 $$
   \|Q\|_{C^2} \cdot   \| \f \|_{\Cr} \cdot \|\f\|_{C^1} \cdot  \|t \| + \, 
   \|Q\|_{C^1} \cdot  \| \f \|_{\Cr}\cdot  \|t \|^\a.
 $$
So the $\a$-H\"older constant at $0$ of $D_{\f(t)} Q \circ D^{(N)}_t \f$ 
is estimated by $2\|Q\|_{C^N} \,  \| \f \|_{\Cr}^2$.
The other terms in the sum are $C^1$ and hence are Lipschitz with constant 
bounded by supremum norms of their derivatives. These norms, along with 
the norms of lower derivatives of $Q \circ \f$ can be estimated as a sum
of termss of the type
$$
  \| D^{(k)}_{\f(t)} Q \circ D^{(j)}_t \f )\| \le   
  \| D^{(k)}_{\f(t)} Q \| \cdot \|D^{(j)}_t \f \|^k \le \|Q\|_{C^N} \,  \| \f \|_{\Cr}^N.
 $$
We conclude that $ \|Q \circ \f \|_{\Cr} \le c_N\, \|Q\|_{C^N} \,  \| \f \|_{\Cr}^N$.
\end{proof}

We apply the lemma with $Q=\h^N$ and then with $Q=\p_x^{-1}$.
We conclude that $T(\h^N)$ is $\Cr$. Moreover, since $\| \f \|_{\Cr}^N$
is $2\e$-tempered by \eqref{estCL} and using \eqref{Pn est} and \eqref{H est}
we obtain that for $M'=M+N^3+2N^2 $
\begin{equation}\label{TH est}
 \| T(\h^N) \|_{\Cr(B\,{x_k,\rho(x_k)})} \le c_3(x) e^{M'k\e}.
 \end{equation}
Recall that we chose $L\ge \max \{ \kappa, M' \}$ and $r<\rho (x)$.
Then we obtain by the definition of $r_k$ and Lemma \ref{4.1} (1) 
that for all $k \ge 0$
\begin{equation}\label{r est}
  r_k =r e^{-2L\e k} < e^{-L\e k} \rho(x_k) \le  \rho(x_k).
 \end{equation}
We conclude that with such choices we have $\bar \Delta^N\in \c^x$ with
 $$
\|\bar \Delta^N\|_{\c^x}\le D'= \sup_{k\ge0}  e^{-Lk\e} \| \Delta _k \|_{\Cr(B\,{x_k,\rho(x_k)})}< \infty.
 $$

Now we consider the operator induced by $T$  on $\c^x$:
$$
(T^x (\bar R)) _k =   (\p_{x_k})^{-1} \circ R_{k+1} \circ  \f_{x_k} .
$$
We denote by $B^x(D)$ the ball of radius $D=D'/\theta$ in $\c^x$,
where $\theta>0$ given by \eqref{theta}. We will choose $L$ and $r$ 
so that for any
$\bar R \in B^x(D)$ the maps $(T (R)) _k$ are defined on 
$B_{x_k,r_k}$ and $\|T^x (\bar R)\|_{\c^x} \le (1-\theta) \| \bar R\|_{\c^x}$.
Then it will follow that   $\tilde T^x  : B^x(D) \to B^x(D)$ and is also a 
contraction, whose unique fixed point gives the desired solution.

First we check that the compositions in $(T (R)) _k$ are well-defined. 
We take $t \in B_{x_k,r_k}$ and show that $t'= \f_{x_k}(t)$ is in 
$B_{x_{k+1},r_{k+1}}$. Since
by \eqref{r est} $t$ is in the ball $B_{x_k,\rho(x_k)}$ in standard metric,
the estimates in Lemma \ref{4.1} hold for any $k$.
In particular, by  (2),(5)
\begin{equation}\label{t'}
\|  D_t^{(1)} \f_{x_k} \|_{x_{k+1}\leftarrow x_k} \le e^{\chi_\ell+2\e} \quad \text{and}
\quad \|\f_{x_k}(t)\|_{x_{k+1}}\le e^{\chi_\ell+2\e} \|t\|_{x_k} < \|t\|_{x_k},
 \end{equation}
the last since $\chi_\ell+2\e<0$, which also yields 
\begin{equation}\label{t'2}
 \|t'\|_{x_{k+1}}= \|\f_{x_k}(t)\|_{x_{k+1}}\le e^{\chi_\ell+2\e}re^{-2Lk\e}
\le re^{-2L(k+1)\e} =r_{k+1}, 
 \end{equation}
since by the choice of $\e$ we have
\begin{equation}\label{echi_l}
\chi_\ell+2L\e+2\e\le0. 
 \end{equation}

Estimating $t''=R_{k+1}(t')$ using \eqref{deriv'} and \eqref{Cr=a} we obtain that
for any $R \in B^x(D)$
\begin{equation}\label{t''}
\|t''\|_{x_{k+1}} 
\le \|t'\|_x \|D^{(N)} R_{k+1}\|_{x_{k+1},\a}
 \le r_{k+1} e^{L(k+1)\e} \|\bar R\|_{\c^x} \le r e^{-L(k+1)\e} D < \rho(x_{k+1})
\end{equation} 
by \eqref{r est}, provided that $rD<\rho(x)$.

Now we will show that $T^x$ is a contraction on $B^x(D)$. For this we first estimate $\|D^{(N)}  (T(\bar R))_k \|_{x_k,\a}$. We consider
 \begin{equation} \label{est}
\begin{aligned}
  & D_t^{(N)} \, (T^x(R))_k  = D_t^{(N)}  \left( (\p_{x_k})^{-1} \circ R_{k+1} \circ  \f_{x_k} \right)= \\
  & = D_{t''}^{(1)} \, (\p_{x_k})^{-1} \circ D_{t'}^{(N)} R_{k+1} \circ D_t^{(1)} \f_{x_k} \,+ \,J,
  \end{aligned}
\end{equation}
where we again denoted $t'= \f_{x_k}(t)$ and $t''=R_{k+1}(t')$, and where $J$ consists of  a fixed number of terms of the type
$$
   D_{t''}^{(i)} \, (\p_{x_k})^{-1} \circ D_{t'}^{(j)} R_{k+1} \circ D_t^{(m)} \f_{x_k}, \quad ijm=N, \; j<N.
  $$
Their norm can be estimated using \eqref{normP} as 
$$
  \| D_{t''}^{(i)} \, (\p_{x_k})^{-1} \|_{x_k\leftarrow x_{k+1}} \cdot \| D_{t'}^{(j)} R_{k+1} \|^i_{x_{k+1}\leftarrow x_{k+1}}\cdot \| D_t^{(m)} \f_{x_k}\|^{ij}_{x_{k+1}\leftarrow x_{k}}.
  $$
For the last term we have 
$\| D_t^{(m)} \f_{x_k}\|_{x_{k+1}\leftarrow x_{k}} \le K(x)C(x) e^{(2k+1)\e} $ 
by \eqref{estCL}. For the middle term we have by \eqref{deriv} and \eqref{t'} 
as $j\le N-1$ 
 $$
 \| D_{t'}^{(j)} R_{k+1}\|_{x_{k+1}\leftarrow x_{k+1}} \le 
  \|t'\|_{x_{k+1}}^{1+\a} \cdot \| D^{(N)} R_{k+1} \|_{x_{k+1},\a} 
 < \|t\|_{x_k}^{1+\a} \cdot \|\bar R \|_{\c^x}\,e^{L(k+1)\e}. 
 $$ 
 For the first term, since $t'' \in B_{x_{k+1},\rho(x_{k+1})}$ by \eqref{t''},
 we use \eqref{Pn est} to get
\begin{equation}\label{Pn est'}
 \| D_{t''}^{(i)} \, (\p_{x_k})^{-1} \|_{x_k\leftarrow x_{k+1}} \le c_2(x) e^{Mk\e}, 
 \end{equation}
for all $k$ and $1 \le i \le d$. Therefore, with $M''=M+2+L$
\begin{equation}\label{J}
\| J\| < c_4(x)  e^{(Mk+L(k+1)+2k+1)\e}\, \|t\|_{x_k}^{1+\a} \, \|\bar R \|_{\c^x}
< c_4(x) e^{M''\e (k+1)}  \, \|\bar R \|_{\c^x} \,r_k \,\|t\|_{x_k}^{\a}.
\end{equation}

Now we estimate  the main term in \eqref{est}.  As we observed before,
estimates in Lemma \ref{4.1} apply to $t, t', t''$. In particular, we use \eqref{t'} for $\f$ term. For the $\p$ term we claim that
\begin{equation}\label{Pt''}
\| D_{t''}^{(1)} \, (\p_{x_k})^{-1} \|_{x_k\leftarrow x_{k+1}} \le e^{\chi_1+2\e}.
\end{equation}
This follows  from 
$$\| D_{0}^{(1)} \, (\p_{x_k})^{-1} \|_{x_k\leftarrow x_{k+1}} =
\|  \, \fd_{x_k}^{-1} \|_{x_k\leftarrow x_{k+1}}\le e^{\chi_1+\e}$$
similarly to \eqref{Dt'} in Lemma \ref{4.1}. Indeed, if $d=1$ then this follows 
as $D^{(1)} \, (\p_{x_k})$ is constant. If $N \ge 2$ then the Lipschitz  constant 
of $D^{(1)} \, (\p_{x_k})^{-1}$ is at most $c_2(x) e^{Mk\e}$  by \eqref{Pn est}, 
so  using \eqref{t''} we obtain as $M<L$
$$
\|D_{t''}^{(1)} \, (\p_{x_k})^{-1} -D_{0}^{(1)} \, (\p_{x_k})^{-1} \|_{x_k\leftarrow x_{k+1}} \le c_2(x) e^{Mk\e} \| t'' \|_{x_{k+1}}  \le  c_2(x) rD e^{(M-L)(k+1)\e} < \e \, e^{\chi_1}
$$
 provided that $r<e^{\chi_1}(c_2(x)D)^{-1}$.

 We conclude using \eqref{Pt''}, \eqref{t'}, and \eqref{t'2} that 
\begin{equation} \label{main term}
\begin{aligned}
&\| D_{t''}^{(1)} \, (\p_{x_k})^{-1} \circ D_{t'}^{(N)} R_{k+1} \circ D_t^{(1)} \f_{x_k} \|_{x_k\leftarrow x_{k}} \,\le \\
&\le \,
  \| D_{t''}^{(1)} (\p_{x_k})^{-1} \|_{x_{k}\leftarrow x_{k+1}} \cdot \|D^{(N)} R_{k+1} \|_{x_{k+1},\a}\, \|t'\|_{x_{k+1}} ^\a \cdot \|  D_t^{(1)} \f_{x_k} \|_{x_{k+1}\leftarrow x_{k}}  ^N \,\le \\
 & \le \, e^ {-\chi_1+2\e } \cdot \|\bar R \|_{\c^x} e^{L(k+1)\e} e^{\a (\chi_\ell+2\e)}
 \|t\|_{x_k}^\a   \cdot e^{N (\chi_\ell+2\e)}  \,= \, e^{-\nu+L'\e} \|t\|_{x_k}^\a\, \|\bar R \|_{\c^x}\, e^{Lk\e}  ,
 \end{aligned}
\end{equation}
where $\nu=-(N+\a)\chi_\ell+\chi_1>0$ and $L'=2+L+2(N+\a)$. 
Provided that
\begin{equation}\label{e_0 nu}
\e \le \e_0 \le \nu/(2L') 
 \end{equation}
we obtain that  $e^{-\nu+L'\e} \le e^{-\nu/2}= 1-2\theta$ where we defined
\begin{equation}\label{theta}
 \theta = (1- e^{-\nu/2})/2 >0.
 \end{equation}
Combining the estimates \eqref{J} and \eqref{main term} we get 
for $\bar R \in B^x(D)$
$$
\|D_t^{(N)} \, (T^x(\bar R))_k \|_{x_k\leftarrow x_{k}} \le \, \|t\|_{x_k}^\a\, \|\bar R \|_{\c^x}\, e^{Lk\e}\, [1-2\theta + c_4(x) r_k  e^{\e (M'' (k+1)-Lk)} ] .
$$
Since $r_k=r e^{-2Lk\e}$ and $3L>M''$ we see that for all $k\ge 0$
$$
c_4(x)\, r_k\,  e^{\e (M'' (k+1)-Lk)} \le c_4(x)\, r  e^{\e (M'' (k+1)-3Lk)} \le
  c_4(x)\, r e^{\e M''} \le \theta 
$$
if we choose $r$ satisfying $r\le  \theta /(c_4(x) e^{\e M''} )$ in addition to 
$r<\rho (x)/D= \theta \rho (x)/D'$.
Then for all $\bar R \in B^x(D)$ we obtain
$$
\|D^{(N)} \, (T^x(\bar R))_k \|_{x_k,\a} \le \,  (1-\theta) \|\bar R \|_{\c^x}e^{Lk\e}
\quad \text{and}
$$
$$
\|T^x (\bar R)\|_{\c^x} =  \sup_k \, e^{-Lk\e} \|D^{(N)}  (T^x(\bar R))_k \|_{x_k,\a} 
\le \, (1-\theta) \|\bar R \|_{\c^x} .
$$

Since $\| \bar \Delta^N\|_{\c^x} \le D'= \theta D$ the operator 
$\tilde T^x(\bar R)=T(\bar R)+\bar \Delta^N$ is also a contraction and 
preserves $B^x(D)$. Thus $\tilde T^x$ has a unique fixed point 
$\bar R^x \in B^x(D)$ which depends measurably on $x$. As in the 
construction of Taylor coefficients, the uniqueness implies that 
$(R^x)_0$ is $L\e$-tempered and solves the equations \eqref{tilde T} 
and \eqref {Delta}. Thus
the measurable family of coordinate changes $ \h_x=\h^N_x+(R^x)_0$, 
 is also $L\e$-tempered and  conjugates $\p_x$ and $\f_x$.

We conclude that the maps $\h_x$ is a family of $\Cr$ diffeomorphisms 
defined on $B_{x,r(x)}$ which depend measurably on $x\in \M$  and $L\e$-tempered. Since $\chi_1 +2\e +L\e <0$, we can extend each $\h_x$ to 
$B_{x,\rho(x)}$.
Indeed, by Lemma \ref{4.1}  for each $t \in B_{x,\rho(x)}$ we will have 
$\f^k_x (t) \in B_{x_k,r_k}$ for some $k$. Then $\h_x$ is defined uniquely
by invariance.

\vskip.5cm

\subsection {Prove of part (2): ``uniqueness" of $\h$}
This essentially follows from the ``uniqueness" of the construction.
First we construct inductively coordinate changes 
$ \h_k=\{ \h_{k,x}\}$ for $k=1,..., N$ with $ \h_1 =\tilde \h$. 
Consider the Taylor series
$$\h_{1,x}(t)= \sum_{n=1}^\infty \hd^{(n)}_{1,x}(t). $$
By assumption $\hd^{(1)}_{1,x}=\hd^{(1)}_{x}=\Id$. Then $\hd^{(2)}_{1,x}$
and $\hd^{(2)}_{x}$ satisfy the same equation \eqref{barHn} when projected
to the factor-bundle $\po^{(2)} / \s^{(2)}$. By uniqueness of the solution of 
\eqref{barHn} we obtain that $\hd^{(2)}_{x}=\hd^{(2)}_{1,x} + \Delta^{(2)}_x$,
where $\Delta^{(2)}_x \in \s^{(2)}_x$, and then the polynomial $\Id + \Delta^{(2)}_x$
is in $G_\chi$. Now we consider the coordinate change 
$\h_{2,x}=(\Id + \Delta^{(2)}_x)\circ \h_{1,x}$, which 
 conjugates $\f$ to a new normal form 
 $$\p_{2,x}= (\Id + \Delta^{(2)}_{fx})\circ\p_{1,x} \circ (\Id + \Delta^{(2)}_x)^{-1}
 $$
 which is also of sub-resonance type.
By construction $\hd^{(2)}_{2,x}=\hd^{(2)}_{1,x} + \Delta^{(2)}_x=\hd^{(2)}_{x}$,
so that $\h$ and $\h_2$ have the same Taylor terms up to order two.
 
\vskip.1cm
Inductively, suppose $\h_{k-1}$ is constricted so that 
$$
 \hd^{(n)}_{k-1,x}
\text{ are }n^2\e\text{-tempered for }n=1,...,N, \quad
\hd^{(n)}_{x}=\hd^{(n)}_{k-1,x} \,\text{ for }n=1,...,k-1,
$$ 
and the corresponding normal form  $\p_{k-1 ,x}$ is of 
sub-resonance type. It follows that $\p$ and $\p_{k-1}$  have the same 
terms up to order $k-1$. Hence $\hd^{(k)}_{k-1,x}$
and $\hd^{(k)}_{x}$ satisfy the same equation \eqref{barHn} when projected
to the factor-bundle $\po^{(k)} / \s^{(k)}$. Indeed, the $Q$ term defined by 
\eqref{Q} is composed only of $F^{(i)}$ and terms $H^{(i)}$ and  $P^{(i)}$ with $1<i<k$, which are the same for $\h_{k-1}$ and $\h$.
By uniqueness we obtain that 
$$
\hd^{(k)}_{x}=\hd^{(k)}_{k-1,x} + \Delta^{(k)}_x,\,\text{ where }\,
\Delta^{(k)}_x \in \s^{(k)}_x.
$$
 Then the coordinate change $\h_{k,x}=(\Id + \Delta^{(k)}_x)\circ \h_{k-1,x}$ has the same Taylor terms as $\h$ up to order $k$ and, since the polynomial $\Id + \Delta^{(k)}_x$ is in $G_\chi$, $\h_{k}$ conjugates $\f$ to a sub-resonance normal form 
$\p_{k,x}= (\Id + \Delta^{(k)}_{fx})\circ\p_{k-1,x} \circ (\Id + \Delta^{(k)}_x)^{-1}$. 
To complete the inductive step we need to show that $\| \hd^{(n)}_{k,x} \|$
is $n^2\e$-tempered. It suffices to show this for $\|R^{(n)}\|$ where
$R=\Delta^{(k)}_x\circ \h_{k-1,x}$. Since $\Delta^{(k)}_x$ is homogeneous 
of degree $k$, we have for $j=n/k$
$$\|R^{(n)}\|=\|\Delta^{(k)}_x\circ \h^{(j)}_{k-1,x}\| \le 
\|\Delta^{(k)}_x\| \cdot \| \h^{(j)}_{k-1,x}\|^k,$$
which is $(k^2+j^2k)\e$-tempered by the inductive assumption and 
the definition of $\Delta^{(k)}_x$. Since $j\le n/2$ as $k\ge2$ we get 
$j^2k=jn \le n^2/2$.
If also $j\ge2$ we have $k^2\le n^2/4$ and we obtain $n^2\e$-temperedness.
If $j=1$, we have $R^{(k)}=\Delta^{(k)}$, which is also $k^2\e$-tempered.

Thus in $N$ steps we obtain the coordinate change   
$$
\h_{N,x}= G_x \circ \tilde \h_{x}, \;\text{ where }\,
G_x = (\Id + \Delta^{(N)}_x) \circ \dots \circ (\Id + \Delta^{(2)}_x) \in G_\chi,
$$
 which has the same Taylor terms at $0$ as $\h$ up to order $N$.
 In fact,  for $n>d$ we have $\s^{(n)}=0$ and hence $\Delta^{(n)}=0$,
so that $\h_{N}=\h_{d}$. 
Now we show that $\h=\h_{N}$, which also proves the last statement in 
part (2) of the theorem. The equality follows from the uniqueness in 
the final step of the construction.  Indeed the difference $R=\h-\h_N$ 
has zero derivatives up to order $N$ at the zero section and
satisfies $R=T(R)$ for the operator $T$ from \eqref{T}. Hence
$R=0$ by uniqueness of the fixed point in the appropriate space 
$\c_{r,x}$ on which $T$ induces a contraction. To ensure that
the sequence $(R_{x_k})$ is in $\c_{r,x}$ we need estimate 
temperedness of $\a$-H\"older constant at $0$ for $\h^{(N)}_{N}$.
As above one can see that all terms in the polynomial $G_x$
are $N^2\e$-tempered. Then using Lemma \ref{QF} and the assumption 
on $\tilde \h$ we obtain that $\| \h_{N,x}\|_{\Cr}$ is $\tilde L\e $-tempered
for $\tilde L= (N^2+NL) < (N+1)L$. Hence $(R_{x_k})$ is in $\c_{r,x}$ with
$\tilde L$ in place of $L$, on which $T$ is a contraction if $\e <\e_1= \e_0/(N+1)$, 
as the inequalities \eqref{e_0 nu} and \eqref{echi_l} are satisfied. 
Thus $(R_{x_k})=0$ and extending by invariance, as \eqref{echi_l} is satisfied,
we conclude that $R_x=0$ on $B_{x,\rho(x)}$, and so $\h=\h_{N}$.

\subsection {Proof of Corollary \ref{Cinf}}

By part (2) of Theorem \ref{NFext}, if we fix a choice of Taylor polynomials 
of degree $d$ for $\h_x$, then the family $\h_x$ is unique. Then for each 
$N>d$ we can do the construction in part (1) with this fixed choice of 
Taylor polynomials and obtain the family of $C^{N}$ diffeomorphisms
$\h_x$. By uniqueness, all these families coincide and hence
 $\h_x$ are $\Ci$ diffeomorphisms. 


\subsection {Proof of part (3): Centralizer of $\h$}

First we prove that the derivative at zero section $\G_x=D_0\g_x$ is sub-resonance.
This is equivalent to the fact that $\G_x$ preserves the fast flag associated with
the Lyapunov splitting. 
Suppose to the contrary that for some $x \in \La$ and some $i<j$ we have a vector
$t$ in $E^i_x$ such that $t'=\G_x(t)$ has nonzero component $t'_j$ in $E^j_{gx}$.
Then 
$$
\| (F ^n_{gx}  \circ \G_{x} )(t) \|_{f^ngx} \ge \| F ^n_{gx} (t'_j) \|_{f^ngx}
 \ge e^{(\chi_j -\e)n}\, \| t'_j \|_{gx}
$$
and on the other hand
$$
\| (F ^n_{gx}  \circ \G_{x} )(t) \|_{f^ngx} = \| \G_{f^n x} ( F ^n_x \,t) \|_{gf^nx} \le 
\| \G_{f^n x} \|_{gf^nx\leftarrow f^nx} \cdot e^{(\chi_i +\e)n} \| t \|_x \le C e^{(\chi_i +3\e)n}
$$
which is impossible for large $n$ since $\e$ is small. Here we used that
the $\Cr$ norm $\|\g_{x} \|_{\Cr,x}$  with respect to the Lyapunov metric on $\E_{x}$  is $2\e$-tempered. This follows as in \eqref{estCL} since $\|\g_{x} \|_{\Cr}$ in standard norm is $\e$-tempered by assumption.

We conclude that $\G_x$ is sub-resonance for each $x\in \La$. 
Now we consider a new family of coordinate changes
$$\tilde \h_x = \G_x^{-1}\circ \h_{gx} \circ \g_x$$ 
which also satisfies $\tilde \h_x(0)=0$ and $D_0\tilde \h_x=\Id$. A direct calculation shows that
$$
\begin{aligned}
\tilde \h_{fx} \circ \f_x \circ \tilde \h_x & \,=\,  
\G_{fx}^{-1}\circ \h_{fgx} \circ \g_{fx} \circ \f_x \circ  \g_x^{-1}\circ \h_{gx}^{-1} \circ \G_x = \\
&\, = \,\G_{fx}^{-1}\circ \h_{fgx} \circ  \f_{gx} \circ \h_{gx}^{-1} \circ \G_x \,= \,
\G_{fx}^{-1}\circ \p_{gx} \circ \G_x \,= \,
\tilde \p_x,
\end{aligned}
$$
where $\tilde \p_x $ is a sub-resonance
polynomial as a product of sub-resonance polynomials. Now we would like to  to 
apply the uniqueness part of the theorem, which would give
 $\tilde \h_x = G_x \h_x $ for some 
tempered function $G_x \in G_\chi$. Then it follows from the definition of 
$\tilde \h_x$ that
$$  \h_{gx} \circ \g_x = \G_x \circ \tilde \h_x = (\G_x G_x) \circ \h_x $$
so that $\h_{gx} \circ \g_x \circ \h_x^{-1}=\G_x G_x $, which is again
a sub-resonance polynomial, as claimed.

To complete the proof it remains to show that $\tilde \h_x$ is suitably tempered to obtain 
uniqueness. 
As before, we can estimate the Lyapunov norm of the $n^{th}$ Taylor 
term of $\tilde \h_x$ as
$\|\tilde \h_x^{(n)}\|=\| \G_x^{-1}\| \circ \| \h_{gx}^{(k)}\| \circ \| \g_x^{(j)}\|^k $
with $n=kj$ and obtain that it is $m\e$-tempered with $m \le 2+k^2+ 2k <3 n^2$ for $n \ge2$.  Since $\| \h \|_{\Cr}$ is $L\e$-tempered, using 
Lemma \ref{GF} below with $Q=\h$ and $\f =\g$ we obtain that  
$\|  \h \circ \g \|_{\Cr}$ is $(L+ 2(N+\a))\e$-tempered.  Then  Lemma \ref{QF} implies that 
$\| \tilde \h \|_{\Cr}$ is $3L\e$-tempered as $(2+ L+ 2(N+\a))\le 3L$
(provided that $L\ge N+2$).
So the uniqueness result applies if $\e <\e_*=\e_1/3=\e_0/3(N+1)$.

\begin{lemma} \label{GF}
If $Q$ and $\f $ are $\Cr$, 
then $Q \circ \f$ is $\Cr$ and \\
$ \|Q \circ \f \|_{\Cr} \le c_N'' \, \|Q\|_{\Cr} \,  \| \f \|_{\Cr}^{N+\a},\,$ where $c_N''$
depends on $N$ only.
\end{lemma}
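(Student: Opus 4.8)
The plan is to follow the proof of Lemma \ref{QF}, replacing the global Lipschitz bound on $D^{(N)}Q$ (available there because $Q$ was a polynomial) by the $\alpha$-H\"older condition on $D^{(N)}Q$ at $0$ contained in $\|Q\|_{\Cr}$; this single replacement is exactly what raises the exponent on $\|\f\|_{\Cr}$ from $N$ to $N+\alpha$. As in that lemma we work on a ball around $0$ on which the composition is defined: since $\f$ preserves the zero section, $\|\f(t)\|=\|\f(t)-\f(0)\|\le\|\f\|_{C^1}\|t\|$, so $\f$ maps a ball of radius $r$ into a ball of radius $\|\f\|_{C^1}r$, and we shrink $r$ accordingly so that the relevant values of $\f(t)$ lie where $Q$ and its H\"older data are controlled. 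That $Q\circ\f$ is $C^N$ is the ordinary chain rule.

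First I would expand $D^{(N)}_t(Q\circ\f)$ by the Fa\`a di Bruno formula as a sum over partitions $\pi$ of $\{1,\dots,N\}$ of terms $D^{(|\pi|)}_{\f(t)}Q\circ\bigotimes_{B\in\pi}D^{(|B|)}_t\f$, and single out two distinguished terms: the all-singletons term $D^{(N)}_{\f(t)}Q\circ(D^{(1)}_t\f)^{\otimes N}$ and the single-block term $D^{(1)}_{\f(t)}Q\circ D^{(N)}_t\f$ (these coincide when $N=1$). Every remaining term (present only when $N\ge3$) involves solely derivatives $D^{(k)}Q$ and $D^{(j)}\f$ with $1\le k,j\le N-1$; each such factor is $C^1$ on the relevant ball, hence Lipschitz there with constant controlled by one more derivative, so that term is $C^1$ and, by the product rule and \eqref{normP}, both its supremum norm and its Lipschitz constant at $0$ are bounded by $c\,\|Q\|_{\Cr}\,\|\f\|_{\Cr}^{m}$ with $m\le N$. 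The same Fa\`a di Bruno bookkeeping bounds the lower-order supremum norms $\|D^{(m)}(Q\circ\f)\|_0\le c\,\|Q\|_{\Cr}\|\f\|_{\Cr}^{m}$ for $0\le m\le N$, while $\|Q\circ\f\|_0\le\|Q\|_0\le\|Q\|_{\Cr}$. The single-block term is handled exactly as the first term in the proof of Lemma \ref{QF}: split $D_{\f(t)}Q\circ D^{(N)}_t\f-D_0Q\circ D^{(N)}_0\f$ as $(D_{\f(t)}Q-D_0Q)\circ D^{(N)}_t\f+D_0Q\circ(D^{(N)}_t\f-D^{(N)}_0\f)$, bound the first piece by the modulus of continuity of $D^{(1)}Q$ at $0$ (Lipschitz if $N\ge2$, $\alpha$-H\"older if $N=1$) times $\|D^{(N)}\f\|_0$, and the second by $\|Q\|_{\Cr}\cdot\|D^{(N)}\f\|_\alpha\le\|Q\|_{\Cr}\|\f\|_{\Cr}$, both giving an $\alpha$-H\"older constant at $0$ of order $\|Q\|_{\Cr}\|\f\|_{\Cr}^{N+\alpha}$ (using $\|t\|\le\|t\|^\alpha$ for $\|t\|\le1$).

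The crux, and the step I expect to be the main obstacle, is the $\alpha$-H\"older constant at $0$ of the all-singletons term $D^{(N)}_{\f(t)}Q\circ(D^{(1)}_t\f)^{\otimes N}$. Writing its difference with the value at $0$ as $(D^{(N)}_{\f(t)}Q-D^{(N)}_0Q)\circ(D^{(1)}_t\f)^{\otimes N}+D^{(N)}_0Q\circ\bigl((D^{(1)}_t\f)^{\otimes N}-(D^{(1)}_0\f)^{\otimes N}\bigr)$, I bound the first summand by $\|D^{(N)}Q\|_\alpha\,\|\f(t)\|^\alpha\,\|D^{(1)}_t\f\|^N\le\|Q\|_{\Cr}\,(\|\f\|_{C^1}\|t\|)^\alpha\,\|\f\|_{C^1}^N=\|Q\|_{\Cr}\,\|\f\|_{\Cr}^{N+\alpha}\,\|t\|^\alpha$ --- this is precisely where the exponent $N+\alpha$ is forced and cannot be lowered --- and the second by $N\,\|Q\|_{\Cr}\,\|\f\|_{C^1}^{N-1}\,\|D^{(1)}_t\f-D^{(1)}_0\f\|\le N\,\|Q\|_{\Cr}\,\|\f\|_{\Cr}^{N}\,\|t\|^\alpha$, again using that $D^{(1)}\f$ is Lipschitz if $N\ge2$ and $\alpha$-H\"older at $0$ if $N=1$. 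Collecting the finitely many contributions (the number of partitions of $\{1,\dots,N\}$ and the associated combinatorial constants depend on $N$ alone) and taking the maximum over the supremum norms and the $\alpha$-H\"older constant of $D^{(N)}(Q\circ\f)$ yields $\|Q\circ\f\|_{\Cr}\le c_N''\,\|Q\|_{\Cr}\,\|\f\|_{\Cr}^{N+\alpha}$; in particular $D^{(N)}(Q\circ\f)$ is $\alpha$-H\"older at $0$, so $Q\circ\f\in\Cr$, as claimed. (As in Lemma \ref{QF} we tacitly normalize $\|\f\|_{\Cr}\ge1$, which holds in all our applications, so that the dominated powers $\|\f\|_{\Cr}^{m}$ with $m\le N$ are absorbed into $\|\f\|_{\Cr}^{N+\alpha}$.)
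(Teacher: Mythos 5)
Your proof is correct and follows essentially the same route as the paper's: both reduce to the argument of Lemma \ref{QF} and isolate the all-singletons term $D^{(N)}_{\f(t)}Q\circ(D_t\f)^{\otimes N}$, splitting its increment at $0$ into the $\a$-H\"older variation of $D^{(N)}Q$ along $\f(t)$ (which is exactly where the exponent $N+\a$ arises) and the variation of the tensor power of $D\f$. The only cosmetic difference is that you telescope $(D_t\f)^{\otimes N}-(D_0\f)^{\otimes N}$ directly, whereas the paper bounds the same quantity via the Lipschitz constant of the homogeneous polynomial $N$-form $A\mapsto D^{(N)}_0Q\circ A$ on a ball of radius $\|\f\|_{C^1}$ --- the same computation.
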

\begin{proof}
The proof is the same as in Lemma \ref{QF} except that, since $D^{(N)} Q$ is only H\"older, we also need to estimate the $\a$-H\"older constant at $0$ 
of the  term $D^{(N)}_{\f(t)} Q \circ D_t \f$ in
$$
   D^{(N)}_t \,(Q \circ \f)=  D^{(N)}_{\f(t)} Q \circ D_t \f + D_{\f(t)} Q \circ D^{(N)}_t \f + 
   \sum_{kj=N, \; j,k<N} D^{(k)}_{\f(t)} Q \circ D^{(j)}_t \f .
 $$ 
$$
\begin{aligned}
   D^{(N)}_{\f(t)} Q \circ D_t \f & -  D_{0}^{(N)} Q \circ D_0 \f = \\
   & = (D^{(N)}_{\f(t)} Q- D^{(N)}_{0} Q ) \circ D_t \f  +  
   D^{(N)}_{0} Q \circ D^{(N)}_t \f - D^{(N)}_{0} Q \circ D^{(N)}_0 \f 
   \end{aligned}
 $$
and its norm can be estimated by
$$
   \|  Q \|_{\Cr} \| \f(t)\|^\a \cdot \|D_t \f \|^N +  
   Lip(D^{(N)}_{0} Q)  \cdot \| D_t \f - D_0 \f \| \le
 $$
 $$
 \|Q\|_{\Cr} \cdot (\|\f\|_{C^1} \|t \|)^\a  \cdot  \| \f \|_{C^1}^N + \, 
  c'_N\,  \|D^{(N)}_{0} Q\| \, \| \f \|_{C^1}^{N-1} \cdot \| \f \|_{C^{1,\a}}\cdot  \|t \|^\a \le 
    $$
 $$
  \|t \|^\a \, (  \|Q\|_{\Cr} \cdot \|\f\|_{C^1}^{N+\a}  + \, 
   c'_N \, \|Q\|_{C^N} \cdot \| \f \|_{C^1}^{N-1} \cdot \| \f \|_{C^{1,\a}})  .
 $$
Here we estimated the Lipschitz constant $Lip(D^{(N)}_{0} Q)$ of
the homogeneous polynomial $N$-form $D^{(N)}_{0} Q$ on a ball 
of radius $R=\| \f \|_{C^1}$ by the supremum of its derivative on that 
ball, which is a homogeneous polynomial $(N-1)$-form whose norm 
can be estimated by $\|D^{(N)}_{0} Q\|$ with some constant $c_N'$ 
depending on $N$ only.

So the $\a$-H\"older constant at $0$ of $D^{(N)}_{\f(t)} Q \circ D_t \f$ 
is estimated by 
$$\|Q\|_{\Cr} (\|\f\|_{C^1}^{N+\a} +c'_N \| \f \|_{C^{1,\a}}^N)\le 
(c'_N+1) \|Q\|_{\Cr} \|\f\|_{\Cr}^{N+\a}.
$$
We conclude as in Lemma \ref{QF} that $ \|Q \circ \f \|_{\Cr} \le c_N'' \, \|Q\|_{\Cr} \,  \| \f \|_{\Cr}^{N+\a}$.

\end{proof}

This completes the proof of Theorem \ref{NFext}.
$\QED$


\section{Proof of Theorem \ref{NFfol} } 

\subsection{Proof of (i), (ii), (iii), (v).}

We will apply Theorem \ref{NFext}. First we note that the integrability condition 
for the derivative in Theorem \ref{NFext} was used in the proof only to obtain the 
Lyapunov splitting and the Lyapunov metric. So while the restriction $Df|_\E$ 
may not satisfy this integrability condition, the Lyapunov splitting and the Lyapunov
 metric are obtained in this case from the results for the full differential $Df$. 

The centralizer part (v) will follow directly from (3) of Theorem \ref{NFext} since 
$X'=\cap_{n\in \Z}\, g^n(X)$ is the desired invariant set of full measure as
$g$ preserves the measure class of $\mu$.
Moreover, $g(W_x)=W_{gx}$ since $g$ is a diffeomorphism commuting with $f$, 
so that $X'$ is also saturated by the stable manifolds.

Parts (i), (ii), (iii) essentially follow from Theorem \ref{NFext}, which is formulated 
so as to apply to this setting. First we consider the regular set $\La$. We fix a 
family of local (strong) stable manifolds $W_{x,r(x)}$ for $x \in \La$  of sufficiently 
small size $r(x)$. Identifying $W_{x,r(x)}$ by an exponential map with a neighborhood of $0$ in $\E_x$ we obtain the extension $\f=\{\f_x\}$ of $f$. 
Then the properties of local stable manifolds ensure that $\f$ satisfies 
the assumptions of Theorem \ref{NFext}. Indeed, they are given by $\Cr$ embeddings so that the $\Cr$ norm and $1/r(x)$ are $\e$-tempered for 
any $\e>0$ (see \cite{BP} for a general reference and \cite[Theorem 5]{KtR} 
for a convenient statement of the stable manifold theorem). Hence 
Theorem \ref{NFext} yields existence of the desired family of  local
diffeomorphisms $\h_x$, $x\in \La$, which can be uniquely 
extended to global diffeomorphisms by invariance. 

Now we define $X=\cup_{x\in \La} W_x$ and explain the construction 
of $\h_y$ for any $y\in X$. By iterating it forward we may assume that 
$y\in W_{x,r(x)}$.
While the individual Lyapunov spaces $\E^i$ may 
not be defined for all points $y\in W_{x,r(x)}$, the flag $\V$ of fast subspaces 
\begin{equation}\label{fastflag}
\E_x^1=\V_x^1 \subset \V_x^2 \subset ... \subset \V_x^l =\E_x, \quad \text{where }\; \V^i_x= \E^1_x \oplus \dots \oplus \E^i_x,
\end{equation} 
is defined for each $\E_y=T_y W_{x,r(x)}$. Moreover, the subspaces $\V^i_y$ 
depend H\"older continuously, and in fact $C^{N -1,\a}$, on $y$ along 
$W_x$\, \cite[Theorem 6.3]{R}. 

The key observation is that the notion of sub-resonance polynomial 
depends only on the fast flag $\V$ \cite[Proposition 3.2]{KS15}, not on the
individual Lyapunov spaces $\E^i$, and thus is well-defined for $\E_y$. 
Then the sub-bundle $\s^{(n)}$ of sub-resonance polynomials of degree $n$
is well-defined, invariant under $Df$, and H\"older continuous in $y$ along $W$, 
and hence so is the factor bundle $\po^{(n)} / \s^{(n)}$. 
Then for each $y\in W_{x,r(x)}$  we can define $\h_y$ using the construction 
in  Theorem \ref{NFext}. Indeed, first we constructed the Taylor term of degree 
$n$ using the contraction $\tilde \Phi$ on the bundle 
$\po^{(n)} / \s^{(n)}$ from Lemma \ref{contraction} with linear part estimated as
$\| \Phi_x (R) \|_{\e,x} \le e^{\la +(n+1)\e}\| R \| _{\e,fx}$. Then $\Phi_y$, the corresponding map at $y$ is H\"older close to $\Phi_x$. Using the Lyapunov 
norm at $x$ as the reference norm, we obtain that $\Phi_y$ is also
a contraction with similar estimate for all $y\in W_{x,r(x)}$ provided that $r(x)$ is sufficiently small. Since $f^ky \in W_{f^kx,r(f^kx)}$ by the contraction 
property of $W_{x,r(x)}$, the closeness persists along the forward trajectory.
This argument is similar to the proof of Lemma \ref{4.1}.
Then we obtain that the operator $\tilde \Phi_y$ on the sequence space is also a contraction.
Thus we can define $\bar \h_y^{(n)}$ as before using the unique fixed 
point in the space of sequences. The last step of the construction can be carried out
similarly as it involves only the estimates of the derivatives on the full space $\E$
and does not depend on the splitting.
\begin{remark}
{\em 
Any measurable choice of transversals $\tilde \E^i$ to $\V^{i-1}$ inside $\V^i$,
$i=2,...,\ell$, yields a transversal $\tilde \n^{(n)} $ to $\s^{(n)}$ inside $\po^{(n)}$.
The latter
gives a preferred choice of the lift. The fixed point of the contraction $\bar \h_y^{(n)}$ 
depends H\"older continuously (and even smoothly by appropriate $C^r$ section 
theorem as in \cite{KS15}) on $y$ along $W_{x,r(x)}$ if the same holds for the 
data $\tilde Q$ obtained in the previous step of the construction. To complete 
the inductive step we need a H\"older lift $\h_y^{(n)}$ to $\po^{(n)}$. 
If there is a consistent choice which is H\"older  on the full leaves of $W$, then
we can obtain a family $\{ \h_x\}$ which is H\"older along the leaves of $W$. 
In contrast to the uniform setting of \cite{KS15},  it is not 
clear that such a choice exists. However, this can be done locally on 
$W_{x,r(x)}$, so then one can fix a Ledrappier-Young partition subordinate 
to the leaves of $W$ and obtain H\"older continuity of $\h_x$ on each element. 
}
\end{remark}



\vskip.2cm

\subsection{Consistency of the fast foliations.} \label{foliations}

The leaf $W_{x}$ is subfoliated by unique foliations $U^{k}$ tangent to $\V^k_y$.
We denote by $\bw^{k}$ the corresponding foliations of $\E_x$ obtained by
the identification $\h _x :W_x \to \E_x$. Thus we obtain the foliations 
$\bw^{k}$ of $\E$ which are invariant under the polynomial extension
$\p$. Since the maps $\h _x$ are diffeomorphisms, $\bw^{k}$ are 
also the unique fast foliations with the same contraction rates. They are 
characterized, for any $\e$ sufficiently small so that $\chi _k +\e< \chi_{k+1}$, by
$$\text{for } \;y, z \in  \E_x  \qquad z \in \bw^{k} (y) \; \Leftrightarrow \; \dist (\p^n _x(y),\p^n_x (z)) \le C e^{n (\chi _k +\e)} \text{ for all } n \in \N.$$

It follows from  Definition \ref{SRdef} that sub-resonance polynomials 
$R \in \s_{x,y}$ are {\em block triangular} in the sense that $\E^i$ component 
does not depend on $\E^j$ components for $j<i$ or, equivalently, it maps
map the subspaces $\V^i_x$ of fast flag in $\E_x$ to those in $\E_y$. 

It is easy to see that all derivatives of a sub-resonance  polynomial are 
sub-resonance polynomials. 
In particular, the derivative 
$D_y \p_x$ at {\em any} point $y \in \E_x$ is sub-resonance and hence is 
block triangular. Thus it maps subspaces parallel to $\V^{k}_x$ to subspaces 
parallel to $\V^{k}_{fx}$. Hence the foliation of $\E$ by subspaces parallel to 
$\V^{k}_x$ in $\E_x$ is invariant under the extension $\p$ and hence coincides 
with $\bw^{k}$ by uniqueness of the fast foliation. 
\begin{remark}
This implies that the fast subfoliations $U^{k}$ are as smooth along the 
leaf $W_x$ as the diffeomorphism $\h_x$ which maps them 
to linear subfoliations of $\E_x$. 
\end{remark}

\noindent
It follows that for any $x\in \M$ and any $y\in W_x$  the diffeomorphism
  \begin{equation}\label{hdef}
\g_{x,y}:=\h_y \circ \h_x^{-1} : \E_x \to \E_y
\end{equation}
maps the fast flag of linear foliations of $\E_x$ to that of $\E_y$.
In particular, the same holds for its derivative 
$ D_0 \g_{x,y}=D_x \h_y : \E_x \to \E_y$ and we conclude that 
$ D_0 \g_{x,y}$ is block triangular and thus is a sub-resonance 
linear map.


\vskip.2cm 

\subsection{Proof of (iv): Consistency of normal form coordinates.}

We need to show that the map $\g_{x,y}$ in \eqref{hdef}
is a sub-resonance polynomial map for all $x\in X$ and $y \in W_x$. 
It suffices to consider $x\in \La$ and, using invariance, we may assume that 
$y \in W_x$ is sufficiently close to $x$. 
 First we note that
$$
\g_{x,y} (0)  =\h_y (x)  =:\bar x \in \E_y \quad\text{and}\quad
 D_0 \,\g_{x,y}=D_x \h_y.
$$ 
Since $ \h_{f^n x}^{-1}  \circ \p ^n_x  \circ \h_{x} = f^n = \h_{f^n y}^{-1}  \circ \p ^n_y  \circ \h_{y}$ we obtain that
$$  \h_{f^n y} \circ \h_{f^n x}^{-1}  \circ \p ^n_x  
= \h_{f^n y} \circ f^n \circ \h_x^{-1} = 
  \p ^n_y  \circ \h_y \circ \h_x^{-1}  \quad \text {and hence}
  $$ 
  \begin{equation}\label{hcom}
\g_{f^n x,f^n y} \circ \p ^n_x =  \p ^n_y  \circ \g_{x,y}.
\end{equation}
Now we consider the Taylor polynomial  for $\g_{x,y} : \E_x \to \E_y$ at $t=0 \in \E_x$: 
$$\g_{x,y}(t) \sim G_{x,y}(t)=\bar x + \sum_{m=1}^N G^{(m)}_{x,y}(t). 
$$
Our first goal is to show that all its terms are sub-resonance polynomials.
We proved in Section \ref{foliations} that the first derivative 
$ G^{(1)}_{x,y}=D_x \h_y $ is a  sub-resonance linear map. 

Inductively, we assume that $ G^{(m)}_{x,y}$ has only sub-resonance 
terms for $m=1,...,k-1$ and show that the same holds for $ G^{(k)}_{x,y}$. Suppose for the contrary that $G^{(k)}_{x,y}$ is not a sub-resonance polynomial 
 and consider order $k$ terms in 
the Taylor polynomial at $0\in \E_x$ for \eqref{hcom}. 
Taylor polynomial  for $\p _x^n$ at 
$0\in \E_x$ coincides with $\p _x^n(t)=\sum_{m=1}^d \pd ^{(m)}_{x}(t)$. 
We also consider the Taylor polynomial  for 
$\p _y^n$ at $\g_{x,y}(0) = \bar x \in \E_y$ 
$$
\p _y^n(z)= \bar x_n + \sum_{m=1}^d Q^{(m)}_{y}(z-\bar x), \quad 
\text{where } \bar x_n =\p _y^n (\bar x). 
$$
All terms $Q^{(m)}$ are sub-resonance as the derivatives of a sub-resonance  polynomial.
Consider the Taylor polynomial   for 
$$\g_{f^n x,f^n y}(t) \sim G_{f^n x,f^n y}(t) = \bar x_n  + \sum_{m=1}^N G^{(m)}_{f^n x,f^n y}(t).
$$
Now we obtain from \eqref{hcom} the coincidence of the terms up to degree $N$ in 
$$\bar x_n  + \sum_{j=1}^N G^{(j)}_{f^n x,f^n y}\left( \sum_{m=1}^d \pd ^{(m)}_{x}(t) \right) \sim \,\bar x_n  + 
\sum_{m=1}^d Q^{(m)}_{y}\left( \sum_{j=1}^N G^{(j)}_{x,y}(t)  \right).
$$
Since any composition of sub-resonance polynomials is again sub-resonance, 
the inductive assumption gives that all terms of order $k$ in the above equation must be sub-resonance polynomials except for  
$$
G^{(k)}_{f^nx,f^ny} \left(  \pd ^{(1)}_{x}(t) \right) \quad \text {and}  \quad Q^{(1)}_{y}\left( G^{(k)}_{x,y}(t)  \right). 
$$
Multiplying these terms  on the left by sub-resonance linear map $\left(D_0 G^{(k)}_{f^nx,f^ny}\right)^{-1}= \left(D_{f^n x}\h_{f^n y}\right)^{-1}$ and using the fact 
that $\pd ^{(1)}_{x}=F^n_x=Df^n |_{\E_x}$ and
$$Q^{(1)}_{y} =D_{\bar x} \p _y^n = D_{f^n x}\h_{f^n y} \circ \fd^n_x \circ (D_x \h_y) ^{-1} 
$$ 
we obtain that the following maps from $\E_x$ to $\E_{f^nx}$ agree modulo 
sub-resonance terms
$$
\left(\left(D_{f^n x}\h_{f^n y}\right)^{-1} \circ G^{(k)}_{f^nx,f^ny}\right) \circ  F^n_x  \cong  \fd^n_x \circ  \left( (D_x \h_y) ^{-1} \circ G^{(k)}_{x,y}  \right) 
 \mod  \S_{x,f^nx}.
$$
Since $x, f^n x \in \La$ and thus the spaces $\E_x$ and $\E_{f^nx}$ have
Lyapunov splittings we can decompose these polynomial maps into 
sun-resonance and non sub-resonance terms.
Taking non sub-resonance terms on both sides we obtain the equality
  \begin{equation}\label{NRcom}
N_{f^n x}  \circ  F^n_x  =  \fd^n_x \circ N_{x} 
\end{equation}
where $N_{f^n x}$ and $N_{x}$ denote the non sub-resonance terms in
$\left(D_{f^n x}\h_{f^n y}\right)^{-1} \circ G^{(k)}_{f^nx,f^ny}$ and 
$ (D_x \h_y) ^{-1} \circ G^{(k)}_{x,y}$ respectively. If the latter had only 
sub-resonance terms then so would $G^{(k)}_{x,y}$, contradicting the 
assumption. Hence $N_{x}\ne0$. 
We decompose $N_x$ into components $N_x=(N_x^1,... , N_x^{\ell})$ and let $i$ be the largest index so that  $N^i _x \ne 0$, i.e. there exists $t' \in \E_x$ so that $z'= N (t')$ has non-zero component in $\E^i_y$, which we denote by $z'_i$. Then by \eqref{estAEi} we obtain
 \begin{equation}\label{right}
\| \fd^n_x \circ N_{x} (t') \|_{f^nx} = \| \fd^n_x (z)\|_{f^nx}   \ge e^{n (\chi_i -\e)}  \| z'_i \|_x.
 \end{equation}
 \vskip.1cm
 
 Now we estimate the norm of the $i$ component of the left-hand side of \eqref{NRcom} at $t'$. 
 For each componet $t'_j$ of $t'$  we have 
 $ \| \fd^n_x (t'_j) \|_{f^nx}\le  e^{n  (\chi _j +\e)}  \| t'_j \|_x $  by \eqref{estAEi}.
Let $N^s_{f^n x}$ be a  term of homogeneity type $s=(s_1,  ..., s_{\ell})$ in the component $N_{f^n x}^i$.  Then we obtain as in Lemma \ref{exponents}
 $$
 \| N^s_{f^n x} \left(  \fd^n_x(t')\right) \|_{f^nx} \le 
\| N_{f^n x}\|_{f^nx} \cdot  \| t' \|_x^k \cdot  e^{n \sum s_j (\chi _j +\e)} .
$$
Since no term in $N_{f^n x}^i$ is a sub-resonance one, we have $\chi _i > \sum s_j \chi _j$.  
  This contradicts \eqref{NRcom} and \eqref{right} for large $n$ if $\e$ is sufficiently small since $\| N_{f^n x}\|_{f^nx}$ is tempered. 
 The latter follows from temperedness of $G^{(k)}_{f^nx,f^ny}$ and the 
fact that $D_{f^n x}\h_{f^n y}$ is H\"older close to the identity and so the norm
of its inverse is bounded in Lyapunov metric.

\vskip.1cm

We conclude that  for all $x\in X$ and $y \in W_x$ the Taylor polynomial 
$G_{x,y}$  of $\,\g_{x,y}$ contains only sub-resonance  terms. 
Now we will show that $\g_{y,x}$ coincides with its Taylor polynomial.
Again it suffices to consider $x\in \La$ and $y \in W_x$ which is 
sufficiently close to $x$. In addition to \eqref{hcom} we have the same relation for their Taylor
polynomials
 \begin{equation}\label{Hcom}
G_{f^n y,f^n x} \circ \p ^n_y =  \p ^n_x  \circ G_{y,x}.
\end{equation}
Indeed, the two sides must have the same terms up to order $N$, 
but these are sub-resonance polynomials and thus have no terms of 
degree higher than $d\le N$.

Denoting $\Delta_n= \g_{f^n y,f^n x}-G_{f^n y,f^n x}$ we obtain from
 \eqref{hcom} and \eqref{Hcom} that
 \begin{equation}\label{Hcom2}
\Delta_n \circ \p ^n_y   =  \p ^n_x  \circ \g_{y,x} -  \p ^n_x  \circ G_{y,x} .
\end{equation}

We denote $\Delta= \g_{y,x}-G_{y,x} :\E_y \to \E_x$ and suppose 
that $\Delta \ne0$. 
Let $i$ be the largest index for which the $i$ component of $\Delta$ is 
nonzero. Then there exist arbitrarily small $t' \in \E_y$  such that the $i$ component
$z'_i$ of  $z'=\Delta (t') $ is nonzero. Since $\p ^n_x$ is a sub-resonance 
polynomial, the nonlinear terms in its $i$ component can depend only
on $j$ components of the input with $j>i$, which are the same for
$\g_{y,x}$ and $G_{y,x}$. Thus the $i$ component of the right side of 
\eqref{Hcom2} is $F^n_x (z'_i)$ since the linear part of $\p ^n_x$ is
$F^n_x$ and it preserves the Lyapunov splitting. So by \eqref{estAEi}
we can estimate  the right side of \eqref{Hcom2} 
\begin{equation}\label{HcomR}
\| \left( \p ^n_x  \circ \g_{y,x} -  \p ^n_x  \circ G_{y,x}\right)(t') \|_{f^nx} \ge 
\| F^n_x (z'_i)\|_{f^nx} \ge e^{n  (\chi _i - \e)}  \| z'_i \|_x \ge e^{n  (\chi _1 -\e)}  \| z'_i \|_x .
\end{equation}

Now we estimate  the left side of \eqref{Hcom2}.
Since $\g_{f^n y,f^n x}$ is $\Cr$  there exists 
$C_{n}$ determined by $\| \g_{f^n x,f^n y}\|_{\Cr} $ such that 
\begin{equation}\label{Ck,delta}
\| \Delta_n (t)\| \le C_{n} \| t\| ^{N+\a} \quad\text{for all }
t \in  \E_{f^n x}  \text{ with } \| t\| \le r_n.
\end{equation}

To estimate $\p ^n_y $ we note that $D_0 \p^n_{y}=F^n_y=Df^n |_{\E_y}$ 
and its norm for $y$ close to $x$ can be estimated using Lemma \ref{4.1}(3).
Then $\p ^n_y $ itself can be estimated as in that lemma:
$$\| \p ^n_y  (t) \| \le K e^{n  (\chi _{\ell} +3\e)} \| t\| $$
for all sufficiently small $t \in  \E_{y}$. Combining this with \eqref{Ck,delta}
we obtain
$$
\|\left( \Delta_n \circ \p ^n_y \right) (t') \|\,\le \, C_{n} \|  \p ^n_y  (t')\| ^{N+\a}
\le C_n (K \| t'\| )^{N+\a} e^{n (N+\a) (\chi _{\ell} +3\e)}.
$$
Now we see that this contradicts \eqref{Hcom2} and \eqref{HcomR} for
large $n$ if $\e$ is sufficiently small. Indeed $(N+\a) \chi _{\ell}<\chi _1$
while $C_n$ is tempered and  the Lyapunov norm satisfies
$\| u \| \ge K(x) e^{-n\e}\| u \|_{f^nx}$.
Thus, $\Delta=0$, i.e. the map $\g_{y,x}$ coincides with its Taylor polynomial.
\vskip.1cm

This completes the proof of Theorem \ref{NFfol}.
$\QED$
\vskip.05cm

\subsection {Proof of Corollary \ref{1/2pinch}}
If $d=1$ then all sub-resonance polynomials are linear, the maps 
$\h_y \circ \h_x^{-1} : \E_x \to \E_y$ are affine, and the family 
$\{ \h_x \} _{x\in X}$ is unique by part (2) of Theorem \ref{NFext}.
If we identify $W_x$ with $\E_x$ by $\h_x$, then $\h_y$  for $y\in W_x$ becomes an affine map $\E_x \to T_y \E_x$ with identity differential and
$\h_y(y)=0$. Thus it depends $C^N$ on $y$ as  the coordinate 
system $\h_x$ is $C^N$.


\end{document}